\newtheorem{theorem}{Theorem}[section]
\newtheorem{lemma}[theorem]{Lemma}
\newtheorem{remark}[theorem]{Remark}
\newtheorem{definition}{Definition}[section]
\numberwithin{equation}{section}
\title[an inverse potential problem]{An inverse potential problem for the stochastic diffusion equation with a multiplicative white noise}
\author{Xiaoli Feng}
\address{School of Mathematics and Statistics, Xidian University, Xi'an, 713200, P. R. China}
\email{xiaolifeng@xidian.edu.cn}
\author{Peijun Li}
\address{Department of Mathematics, Purdue University, West Lafayette, Indiana 47907, USA}
\email{lipeijun@math.purdue.edu}
\author{Xu Wang}
\address{LSEC, ICMSEC, Academy of Mathematics and Systems Science, Chinese Academy of Sciences, Beijing 100190, China, and School of Mathematical Sciences, University of Chinese Academy of Sciences, Beijing 100049, China}
\email{wangxu@lsec.cc.ac.cn}
\thanks{The first author is supported by the Natural Science Basic Research Program of Shaanxi (No. 2023-JC-YB-054). The second author is supported in part by the NSF grant DMS-2208256. The third author is supported by the NNSF of China (Nos. 11971470 and 11871068).}
\subjclass[2010]{35K05, 35R30, 60H15, 80A23}
\keywords{stochastic diffusion equation, inverse potential problem, multiplicative white noise, mild solution, uniqueness, regularization}
\date{}
\begin{document}
\maketitle

\begin{abstract}
This work concerns the direct and inverse potential problems for the stochastic diffusion equation driven by a multiplicative time-dependent white noise. The direct problem is to examine the well-posedness of the stochastic diffusion equation for a given potential, while the inverse problem is to determine the potential from the expectation of the solution at a fixed observation point inside the spatial domain. The direct problem is shown to admit a unique and positive mild solution if the initial value is nonnegative. Moreover, an explicit formula is deduced to reconstruct the square of the potential, which leads to the uniqueness of the inverse problem for nonnegative potential functions. Two regularization methods are utilized to overcome the instability of the numerical differentiation in the reconstruction formula. Numerical results show that the methods are effective to reconstruct both smooth and nonsmooth potential functions.
\end{abstract}

\section{Introduction}

As a basic and important mathematical model, the diffusion equation has been used to describe many physical, biological, chemical, economic, and social phenomena. The inverse diffusion problem aims to determine unknown parameters in the equation by using some measured data. It has significant applications in gas dynamics, chemical kinetics, biophysics, medicine, ecology, finance, and many other sciences. Due to the applied and mathematical interests, the inverse problem for the diffusion equation is one of the most studied problems in the inverse problem community. There is a considerable amount of information regarding their solutions \cite{IsakovIPDE}. This work is concerned with an inverse potential problem for the diffusion equation.

The topic of inverse potential problems for the diffusion equation has been extensively explored from both the mathematical and numerical aspects. For example, the uniqueness and stability results are available in \cite{Rundell1987, Canuto2001, Tuan2018, YangCF2021} and \cite{Kian2013}, respectively; some of the computational studies can be found in \cite{Cannon1994, Ou2011, Bell2013}. During the past two decades, the fractional diffusion equation has received much attention in applied disciplines since it can capture more faithfully the dynamics of anomalous diffusion processes. Correspondingly, it has become an area of intensive research on inverse problems for the fractional diffusion equation. In \cite{Chengj2013}, the authors considered the uniqueness of simultaneously recovering the fractional order and the space-dependent diffusion coefficient. The results of global uniqueness were given in \cite{Kian2018} on the inversion of the density, conductivity, and potential functions. The stability estimate was obtained in \cite{Kian2013} for a time-dependent potential function in cylindrical domains. Various reconstruction methods were also developed for the numerical solutions, such as the iterative Newton-type method \cite{Rundell2019}, the modified optimal perturbation method \cite{Ligs2013}, the fixed point iteration method \cite{JinbtZhouz21}, and the Levenberg--Marquardt method \cite{JiangWei2021}. Related results on uniqueness and stability were also discussed in the above literature.

Recently, a lot of attention has been paid to inverse problems of the stochastic diffusion equation, one of the fundamental models in stochastic partial differential equations. Stochastic inverse problems are more challenging than their deterministic counterparts due to uncertainties and randomness. They are much less studied but remain topics of much ongoing research. In \cite{Yuan2017}, a conditional stability was obtained to determine the initial data for the stochastic parabolic equation. In \cite{Lv2012}, the author considered the backward and inverse source problems for the stochastic parabolic equations. The inverse random source problems were studied in \cite{Niu2020, FLW2020, GLWX2021} for the stochastic time-fractional diffusion equations. The inverse potential problem was examined in \cite{Ruan2021} for the diffusion equation with a random source. These problems are based on the stochastic diffusion equations with an additive noise. There is no result for the inverse potential problem of the stochastic diffusion equation with a multiplicative noise. For the first time, we address this problem in this work.

Specifically, we consider the initial boundary value problem for the stochastic diffusion equation driven by a multiplicative white noise
\begin{equation}\label{eq:model}
\left\{
\begin{aligned}
\frac{\partial u}{\partial t}(x,t)=&~\Delta u(x,t)+q(t)u(x,t){\dot{B}}(t), &&  (x,t)\in{D}\times(0,T],\\
u(x,t)=&~0, && (x,t)\in{\partial D}\times(0,T], \\
u(x,0)=&~u_0(x), && x\in\overline{D},
\end{aligned}
\right.
\end{equation}
where $D\subset \mathbb{R}^d$ is a bounded and open domain with Lipschitz boundary $\partial D$, the potential $q\in L^2(0,T)$ is assumed to be a deterministic time-dependent function, and the initial value $u_0$ is a deterministic and nonnegative function. Here, the white noise $\dot B$ is the formal derivative of the standard Brownian motion $B$ and the multiplicative noise $q(t)u(x,t)\dot B(t)$ holds in the It\^o integral sense. In this  model, $q(t)\dot B(t)$ can be viewed as a random potential depending on the time, where $q$ represents the strength of the randomness. The direct problem is to examine the existence, uniqueness, and regularity of the solution $u$ to \eqref{eq:model} for a given potential function $q$. The inverse problem is to determine the potential function $q$ from the measured data $\{u(x_*, t)\}_{t\in(0, T]}$ at some interior observation point $x_*\in D$. In addition to being nonlinear, the inverse potential problem is ill-posed.

In this paper, we study both the direct and inverse problems. It is worth mentioning that the stochastic diffusion equation in \eqref{eq:model} should be interpreted as a stochastic integral equation due to the roughness of the white noise. Using the theory of semigroup and stochastic partial differential equations, we show that the direct problem admits a unique mild solution and the mild solution is an analytically strong solution if the initial condition $u_0$ satisfies some regularity condition. Moreover, an analytical solution is constructed by using the eigenfunctions of the Laplacian. The analytical solution is shown to be strictly positive if the initial condition $u_0$ is nonnegative and not identically zero. Based on the analytical solution and the relation between the deterministic and stochastic diffusion equations, an explicit formula is deduced to reconstruct $q^2$ by the averaged data $\{\mathbb E[\ln u(x_*,t)]\}_{t\in[0,T]}$ over the sample space. As a byproduct of the explicit formula, the uniqueness is obtained to recover $q^2$, which further implies the uniqueness of the inverse problem if the potential function is nonnegative. However, it is unstable to directly make use of the explicit formula to reconstruct $q^2$ numerically since it involves the temporal derivative of the data. Two regularization methods, the Tikhonov method and the spectral cut-off method, are employed to to handle the instability issue. Based on a periodic extension of the data function, the numerical differentiation is implemented efficiently via the fast Fourier transform. Numerical experiments are carried out to investigate the influence of various parameters on the reconstructions. The numerical results show that the methods are effective for both smooth and nonsmooth examples.

The paper is outlined as follows. Section 2 addresses the well-posedness of the direct problem. The inverse problem is discussed in section 3. Section 4 presents the numerical examples to demonstrate the effectiveness of the methods. Section 5 concludes the paper with some general remarks and future work. 

\section{The direct problem}

In this section, we study the direct problem and show that it has a unique mild solution. Moreover, an explicit solution is constructed for the direct problem. The explicit solution plays an important role in the inverse problem.

\subsection{Mild solution}

Omitting the spatial variable, we rewrite \eqref{eq:model} into the standard form of an evolution equation
\begin{equation}\label{eq:evolu}
\left\{
\begin{aligned}
du(t)=&~\Delta u(t)dt+q(t)u(t)dB(t),\quad t\in(0,T],\\
u(0)=&~u_0,
\end{aligned}
\right.
\end{equation}
where $\{B(t)\}_{t\ge0}$ is the Brownian motion defined on a complete probability space $(\Omega,\mathcal{F},\mathbb{P})$.

It is known that the operator $-\Delta$ with the homogeneous Dirichlet boundary condition in $D$ admits a non-decreasing sequence of eigenvalues $\{\lambda_k\}_{k=1}^\infty$, which satisfy $0<\lambda_1\leq\lambda_2\leq\cdots$ with $\lambda_k\to\infty$ as $k\to\infty$, and the eigenfunctions of the Dirichlet Laplacian $\{\varphi_k\}_{k=1}^{\infty}$ form an orthonormal basis of $L^2(D)$. Denote by $L_0^2(D)$ the subspace of $L^2(D)$ with the homogeneous boundary condition. Define the interpolation space
\[
\dot H^\alpha(D):={\rm Dom}((-\Delta)^{\frac{\alpha}2})=\left\{f\in L_0^2(D): \sum_{k=1}^\infty\lambda_k^\alpha(f,\varphi_k)_{L^2(D)}^2<\infty\right\},
\]
which is equipped with the norm
\[
\|f\|_\alpha:=\left(\sum_{k=1}^\infty\lambda_k^\alpha(f,\varphi_k)_{L^2(D)}^2\right)^{\frac12}.
\]
If $\alpha$ is a nonnegative integer, it is shown in \cite[Lemma 3.1]{T06} that the norm $\|\cdot\|_\alpha$ is equivalent to the classical Sobolev norm $\|\cdot\|_{H^\alpha(D)}$ under the boundary condition $(-\Delta)^jf=0$ on $\partial D$ for any $j<\frac\alpha2$. It can be verified that $\dot H^0(D)=L^2_0(D)$ and $\dot H^2(D)=H_0^1(D)\cap H^2(D)$.

Let $S(t)=e^{t\Delta}$ be the analytic strongly continuous semigroup generated by the Laplacian on $L^2(D)$. Then it satisfies the following smoothing property (cf. \cite[Lemma 3.2]{T06}).

\begin{lemma}\label{lm:S}
For any $0\le\alpha\le\beta$ and $f\in\dot H^\alpha(D)$, it holds that
\[
\|S(t)f\|_{\beta}\lesssim t^{\frac{\alpha-\beta}2}\|f\|_{\alpha}.
\]
\end{lemma}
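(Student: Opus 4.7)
The plan is to work in the Dirichlet eigenbasis $\{\varphi_k\}_{k=1}^\infty$ and reduce the semigroup estimate to a uniform scalar bound on the spectrum.

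First, I would expand $f\in\dot H^\alpha(D)$ as $f=\sum_{k\ge1}f_k\varphi_k$ with $f_k=(f,\varphi_k)_{L^2(D)}$. Since $-\Delta\varphi_k=\lambda_k\varphi_k$, the semigroup acts diagonally in this basis: $S(t)f=\sum_{k\ge1}e^{-\lambda_k t}f_k\varphi_k$. Plugging into the definition of $\|\cdot\|_\beta$, I obtain
\[
\|S(t)f\|_\beta^2 = \sum_{k\ge1}\lambda_k^\beta e^{-2\lambda_k t}f_k^2 = \sum_{k\ge1}\bigl(\lambda_k^{\beta-\alpha}e^{-2\lambda_k t}\bigr)\lambda_k^\alpha f_k^2,
\]
which isolates the excess power $\beta-\alpha$ as a scalar weight.

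Next I would estimate this weight uniformly in $k$. Writing $\gamma=\beta-\alpha\ge0$, the function $x\mapsto x^\gamma e^{-2xt}$ on $[0,\infty)$ is maximized at $x=\gamma/(2t)$, where it equals $(\gamma/2)^\gamma e^{-\gamma}t^{-\gamma}$. Hence $\lambda_k^{\beta-\alpha}e^{-2\lambda_k t}\le C_{\alpha,\beta}\,t^{-(\beta-\alpha)}$ for every $k\ge1$ and every $t>0$, with a constant depending only on $\beta-\alpha$ (and equal to $1$ in the trivial case $\alpha=\beta$). Substituting this pointwise bound and recognizing the remaining sum as $\|f\|_\alpha^2$ yields
\[
\|S(t)f\|_\beta^2 \le C_{\alpha,\beta}\,t^{-(\beta-\alpha)}\sum_{k\ge1}\lambda_k^\alpha f_k^2 = C_{\alpha,\beta}\,t^{-(\beta-\alpha)}\|f\|_\alpha^2,
\]
and taking square roots gives the stated inequality with an implicit constant absorbed into $\lesssim$.

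There is no real obstacle; the argument is a textbook spectral calculation. The only mild points of care are handling the endpoint $\gamma=0$ separately (where the estimate is immediate from $e^{-2\lambda_k t}\le 1$) and checking that the maximization constant is finite and independent of $t$, both of which are routine.
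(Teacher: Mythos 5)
Your argument is correct and is exactly the standard spectral proof of this smoothing estimate; the paper itself gives no proof but simply cites \cite[Lemma 3.2]{T06}, where the same eigenfunction-expansion and maximization of $\lambda^{\beta-\alpha}e^{-2\lambda t}$ is used. Nothing is missing.
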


Hereinafter, the notation $a\lesssim b$ stands for $a\le Cb$, where $C$ is a positive constant and may differ from line to line, and the notation $\mathbb P$-a.s. indicates that an equation holds almost surely.

The following definition describes the mild solution to a stochastic differential equation (cf. \cite[Definition 6.2.1]{LR15}).

\begin{definition}
An $L^2(D)$-valued adapted process $\{u(t)\}_{t\in[0,T]}$ is called a mild solution to \eqref{eq:evolu} if
\[
u(t)=S(t)u_0+\int_0^tS(t-s)q(s)u(s)dB(s)\quad\mathbb P\text{-a.s.}
\]
for each $t\in[0,T]$ and the stochastic integral is well-defined.
\end{definition}

Using the relation between analytic weak and mild solutions given in \cite[Proposition G.0.5 and Remark G.0.6]{LR15}, we may easily show from \cite[Theorem 4.2.4]{LR15} that \eqref{eq:evolu} has a unique $L^2(D)$-valued mild solution $u$ satisfying
\[
\mathbb E\left[\sup_{t\in[0,T]}\|u(t)\|_{L^2(D)}^2\right]<\infty
\]
provided that $u_0\in L^2(D)$.

\subsection{Well-posedness}

Since the eigenfunctions $\{\varphi_k\}_{k=1}^{\infty}$ of the Dirichlet Laplacian form an orthonormal basis of $L^2(D)$, the solution $u(t)\in L^2(D)$ admits the expansion
\[
u(x,t)=\sum_{k=1}^\infty u_k(t)\varphi_k(x),
\]
where the coefficient $u_k(t):=(u(t),\varphi_k)_{L^2(D)}$ satisfies the stochastic ordinary differential equation
\begin{equation}\label{SODE}
\left\{
\begin{aligned}
du_k(t)=&-\lambda_k u_k(t)dt+q(t)u_k(t)dB(t), \quad  t\in(0,T],\\
u_k(0)=&~u_{0,k}:=(u_0,\varphi_k)_{L^2(D)}.
\end{aligned}
\right.
\end{equation}
By \cite{Evans2013}, the linear equation \eqref{SODE} has a unique solution given explicitly by
\begin{align*}
u_k(t)=u_{0,k} \exp\left(-\lambda_k t+\int_0^t q(s)dB(s)-\frac12\int_0^t q^2(s)ds\right).
\end{align*}
Hence, the solution to (\ref{eq:model}) can be expressed as
\begin{align}\label{eq:solu}
u(x,t)&=\sum_{k=1}^{\infty}u_k(t)\varphi_k(x)\nonumber\\
&=\sum_{k=1}^{\infty}u_{0,k} \exp\left(-\lambda_k t+\int_0^t q(s)dB(s)-\frac12\int_0^t q^2(s)ds\right)\varphi_k(x)\nonumber\\
&=\left[\sum_{k=1}^{\infty}u_{0,k} \exp(-\lambda_k t)\varphi_k(x)\right]\exp\left(\int_0^t q(s)dB(s)-\frac12\int_0^t q^2(s)ds\right)\nonumber\\
&=:v(x,t)Z(t),
\end{align}
where
\[
v(x,t)=\sum_{k=1}^{\infty}u_{0,k} \exp(-\lambda_k t)\varphi_k(x)
\]
and
\[
Z(t)=\exp\left(\int_0^t q(s)dB(s)-\frac12\int_0^t q^2(s)ds\right).
\]
Furthermore, the function $v$ is the unique solution to the initial boundary value problem of the deterministic heat  equation
\begin{equation}\label{eq:v}
\left\{
\begin{aligned}
\frac{\partial v}{\partial t}(x,t)=&~\Delta v(x,t), &&  (x,t)\in{D}\times(0,T],\\
v(x,t)=&~0, && (x,t)\in{\partial D}\times(0,T], \\
v(x,0)=&~u_0(x), && x\in\overline{D}.
\end{aligned}
\right.
\end{equation}

If the initial value $u_0$ is nonnegative, then the positivity of the solution $v$ follows directly from the strong maximum principle for the heat equation (cf. \cite[Theorem 4]{E10}).

\begin{lemma}\label{lm:v}
Let $u_0\in C(D)$  with $u_0\ge0$ being not identically zero. Then the solution $v\in C_1^2(D\times(0,T])\cap C(\overline{D}\times[0,T])$ and satisfies
\[
v(x,t)>0\quad\forall\, (x,t)\in D\times(0,T],
\]
where $C_1^2(D\times(0,T])$ denotes the space of functions belonging to $C^2$ in space and $C^1$ in time.
\end{lemma}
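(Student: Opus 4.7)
The plan is in two steps: first establish the claimed regularity of $v$ by exploiting the explicit spectral representation derived in the course of \eqref{eq:solu}, and then deduce strict positivity by invoking the strong maximum principle for the heat equation.

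For the regularity, I would work directly with $v(x,t)=\sum_{k=1}^{\infty}u_{0,k}\exp(-\lambda_k t)\varphi_k(x)$. For every $t_0>0$, the exponential factors $\exp(-\lambda_k t)$ decay faster than any polynomial in $\lambda_k$ uniformly for $t\ge t_0$, so the series together with its termwise derivatives in space and time converges absolutely and uniformly on compact subsets of $D\times(0,T]$. This gives $v\in C^\infty(D\times(0,T])$ and in particular $v\in C_1^2(D\times(0,T])$. Continuity on $\overline D\times[0,T]$ is then supplied by standard parabolic regularity for bounded Lipschitz domains with zero Dirichlet data: the smoothing estimate of Lemma~\ref{lm:S}, together with the vanishing of each $\varphi_k$ on $\partial D$ and the continuity of the initial datum $u_0$, ensures that $v$ extends continuously to the parabolic boundary.

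For strict positivity, the weak minimum principle for the heat operator on $D\times(0,T]$ shows that the minimum of $v$ over $\overline D\times[0,T]$ is attained on the parabolic boundary, where $v=0$ on $\partial D\times[0,T]$ and $v=u_0\ge 0$ on $\overline D\times\{0\}$; hence $v\ge 0$ throughout $\overline D\times[0,T]$. Suppose, toward a contradiction, that $v(x_0,t_0)=0$ at some interior point $(x_0,t_0)\in D\times(0,T]$. Then $v$ attains its minimum at an interior parabolic point, and the strong maximum principle applied to $-v$, as stated in \cite[Theorem 4]{E10}, forces $v\equiv 0$ on $\overline D\times[0,t_0]$. Evaluating at $t=0$ gives $u_0\equiv 0$, contradicting the hypothesis. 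Therefore $v(x,t)>0$ for every $(x,t)\in D\times(0,T]$.

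The main obstacle is not any individual step but rather reconciling the low regularity of the Lipschitz domain $D$ and of the merely continuous initial datum $u_0$ with the classical hypotheses of the strong maximum principle. A clean workaround is to restrict attention to a subcylinder $D\times[\varepsilon,T]$, on which the semigroup smoothing instantly produces classical regularity in the interior, apply the strong maximum principle there, and then let $\varepsilon\downarrow 0$ to propagate positivity to the whole of $D\times(0,T]$.
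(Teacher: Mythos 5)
Your argument is correct and follows essentially the same route as the paper, which states the lemma without a written proof and simply invokes the strong maximum principle for the heat equation (\cite[Theorem 4]{E10}) exactly as you do in your positivity step. The extra detail you supply --- interior smoothness from the termwise-differentiated spectral series and the subcylinder $D\times[\varepsilon,T]$ device to reconcile the Lipschitz boundary and merely continuous initial datum with the classical hypotheses --- goes beyond what the paper records, which takes the regularity $v\in C_1^2(D\times(0,T])\cap C(\overline{D}\times[0,T])$ for granted.
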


Based on the expression \eqref{eq:solu}, the well-posedness of the problem \eqref{eq:model} can be further obtained in the strong sense. The regularity and strict positivity of the solution can also be deduced under proper assumptions on the initial data.

\begin{theorem}
Let $q\in L^2(0,T)$ and $u_0\in C(D)\cap\dot H^\alpha(D)$ for some $\alpha\ge0$ with $u_0\ge0$ being not identically zero. Then the solution $u\in C([0,T];\dot H^\alpha(D))$ is strictly positive almost surely and satisfies
\[
\mathbb E\left[\sup_{t\in[0,T]}\|u(t)\|_{\alpha}^2\right]\lesssim\|u_0\|_{\alpha}^2.
\]
Moreover, if $\alpha\ge2$, then the mild solution $u$ is also an analytically strong solution to \eqref{eq:model} such that $u\in C([0,T],\dot H^2(D))$ and
\begin{align}\label{eq:strong}
u(t)=u_0+\int_0^t\Delta u(s)ds+\int_0^tu(s)q(s)dB(s)\quad\mathbb P\text{-a.s.}
\end{align}
\end{theorem}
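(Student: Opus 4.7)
The plan is to exploit the factorization $u(x,t)=v(x,t)Z(t)$ from \eqref{eq:solu}, where $v$ solves the deterministic heat equation \eqref{eq:v} and $Z(t)$ is a scalar stochastic process, and then transfer known properties of $v$ and $Z$ to $u$.

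\textbf{Strict positivity.} Since $u_0\in C(D)$ is nonnegative and not identically zero, Lemma~\ref{lm:v} yields $v(x,t)>0$ on $D\times(0,T]$. On the other hand, $Z(t)=\exp\bigl(\int_0^tq(s)dB(s)-\tfrac12\int_0^tq^2(s)ds\bigr)$ is a well-defined positive random variable for each $t$ because $q\in L^2(0,T)$ guarantees the Itô integral is finite $\mathbb P$-a.s. Thus $u=vZ>0$ almost surely.

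\textbf{Regularity and moment estimate.} Standard heat semigroup theory together with Lemma~\ref{lm:S} (with $\alpha=\beta$) gives $v\in C([0,T];\dot H^\alpha(D))$ and $\sup_{t\in[0,T]}\|v(t)\|_\alpha\le\|u_0\|_\alpha$. Since $Z$ is a real-valued continuous adapted process and $Z$ does not depend on $x$, we have $\|u(t)\|_\alpha=|Z(t)|\,\|v(t)\|_\alpha$, so $t\mapsto u(t)\in\dot H^\alpha(D)$ is continuous $\mathbb P$-a.s. For the moment bound, note that $Z$ is the Doléans--Dade exponential of the martingale $\int_0^\cdot q(s)dB(s)$ and is itself a positive martingale with $\mathbb E[Z(t)]=1$. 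A direct calculation (or writing $Z^2$ as a Doléans--Dade exponential times a deterministic factor) gives $\mathbb E[Z(t)^2]=\exp\bigl(\int_0^tq^2(s)ds\bigr)\le e^{\|q\|_{L^2(0,T)}^2}$. Doob's $L^2$ maximal inequality then yields
\[
\mathbb E\!\left[\sup_{t\in[0,T]}Z(t)^2\right]\le 4\mathbb E[Z(T)^2]\lesssim 1,
\]
and combining with the deterministic bound on $\|v(t)\|_\alpha$ delivers the desired estimate.

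\textbf{Strong form when $\alpha\ge 2$.} Here $u_0\in\dot H^2(D)$, so $v\in C([0,T];\dot H^2(D))$ with $\partial_tv=\Delta v$ holding in $L^2(D)$; consequently $u=vZ\in C([0,T];\dot H^2(D))$ by the same multiplicative argument. Applying the Itô product rule to $u(t)=v(t)Z(t)$, viewing $v$ as an $\dot H^2(D)$-valued process of bounded variation and $Z$ as a real Itô process with $dZ=q(t)Z(t)dB(t)$, gives
\[
du(t)=Z(t)\,dv(t)+v(t)\,dZ(t)=Z(t)\Delta v(t)\,dt+q(t)v(t)Z(t)\,dB(t),
\]
with no cross-variation term since $v$ is deterministic. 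Because $Z(t)$ is spatially constant, $Z(t)\Delta v(t)=\Delta(Z(t)v(t))=\Delta u(t)$, which yields \eqref{eq:strong} after integrating.

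\textbf{Main obstacle.} The only subtlety I anticipate is justifying the Itô product rule at the level of the $\dot H^2(D)$-valued processes and checking that the stochastic and Bochner integrals appearing in \eqref{eq:strong} are well-defined, i.e.\ that $\Delta u\in L^1(0,T;L^2(D))$ and $qu\in L^2(0,T;L^2(D))$ $\mathbb P$-a.s. Both follow from the factorization and the deterministic regularity of $v$ combined with the pathwise continuity of $Z$, but this bookkeeping is where the strong formulation is actually established rather than merely formally derived.
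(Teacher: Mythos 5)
Your proposal is correct and follows essentially the same route as the paper: the factorization $u=vZ$ from \eqref{eq:solu}, Lemma~\ref{lm:v} for positivity, the identity $\|u(t)\|_\alpha=|Z(t)|\,\|v(t)\|_\alpha$ with Lemma~\ref{lm:S}, and Doob's $L^2$ maximal inequality for $Z$. Two small points of divergence are worth noting. First, for the second moment of $Z$ you compute $\mathbb E[Z(T)^2]=\exp\bigl(\int_0^Tq^2(s)ds\bigr)\le e^{\|q\|_{L^2(0,T)}^2}$, which is the correct value; the paper asserts $\mathbb E[|Z(T)|^2]=1$ ``by the same property as \eqref{eq:z}'', which is not right ($Z^2$ is not the stochastic exponential of $\int_0^\cdot q\,dB$ but that exponential times the deterministic factor $e^{\int_0^\cdot q^2}$), though the final bound $\lesssim\|u_0\|_\alpha^2$ survives because the constant is allowed to depend on $q$. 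Second, for the strong formulation when $\alpha\ge2$ the paper simply invokes \cite[Proposition G.0.4]{LR15} to upgrade the mild solution, whereas you derive \eqref{eq:strong} directly by the It\^o product rule applied to $vZ$ (with vanishing cross-variation since $v$ is deterministic and of bounded variation) and verify the integrability of $\Delta u$ and $qu$; your version is more self-contained and makes explicit exactly where the hypothesis $u_0\in\dot H^2(D)$ is used.
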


\begin{proof}
For any nonnegative initial data $u_0\in C(D)\cap\dot H^\alpha(D)$, the positivity of $u$ is obtained directly from  \eqref{eq:solu} and the positivity of the solution $v$ to \eqref{eq:v} given in Lemma \ref{lm:v}.

By \eqref{eq:solu}, it holds that
\begin{equation}\label{th1-s1}
\|u(t)\|_{\alpha}^2=\|v(t)\|_{\alpha}^2Z^2(t),
\end{equation}
where we have from Lemma \ref{lm:S} that
\begin{equation}\label{th1-s2}
\|v(t)\|_{\alpha}^2=\|S(t)u_0\|_{\alpha}^2\lesssim\|u_0\|_{\alpha}^2\quad\forall\, t\ge0.
\end{equation}
The stochastic process $\{Z(t)\}_{t\in[0,T]}$ is a continuous martingale (cf. \cite[$(5.2)$ and $(5.17)$]{KS15}) satisfying
\begin{align}\label{eq:z}
\mathbb E[Z(t)]=\mathbb E\left[\exp\left(\int_0^tq(s)dB(s)-\frac12\int_0^tq^2(s)ds\right)\right]=1\quad\forall\,t\ge0.
\end{align}
Using \eqref{th1-s1}--\eqref{th1-s2} and applying the martingale inequality (cf. \cite{Evans2013}) lead to
\begin{align*}
\mathbb E\left[\sup_{t\in[0,T]}\|u(t)\|_{\alpha}^2\right]\lesssim\|u_0\|_{\alpha}^2\mathbb E\left[\sup_{t\in[0,T]}|Z(t)|^2\right]\le4\|u_0\|_{\alpha}^2\mathbb E\left[|Z(T)|^2\right]=4\|u_0\|_{\alpha}^2,
\end{align*}
where the last equality is obtained from the same property as \eqref{eq:z} by using the It\^o formula.

In particular, if $u_0\in \dot H^\alpha(D)$ for some $\alpha\ge2$, then we have $v(t)\in\dot H^2(D)$, $t\in[0,T]$. It follows from  \cite[Proposition G.0.4]{LR15} that $u(t)\in\dot H^2(D)$, $t\in[0,T]$ is also a strong solution satisfying \eqref{eq:strong}.
\end{proof}

\section{The inverse potential problem}

This section is devoted to the inverse potential problem. We present a simple uniqueness result and consider two regularization approaches to overcome the ill-posendess of the inverse problem.

\subsection{Uniqueness}
By \eqref{eq:solu}, together with the strict positivity of $u$ and $v$ in $D$, we get for any fixed $x_*\in D$ that
\begin{align*}
\mathbb E\left[\ln\frac{u(x_*,t)}{v(x_*,t)}\right]=-\frac12\int_0^t q^2(s)ds\quad\forall\,t\in(0,T].
\end{align*}
Taking the derivative of the above equation with respect to $t$ yields
\begin{align}\label{eq:IP}
q^2(t)=-2\frac{d}{dt}\mathbb{E}\left[\ln {\frac{u(x_*,t)}{v(x_*,t)}}\right],
\end{align}
which is the key equation for the inverse problem.

\begin{theorem}\label{iuniq}
Let $q\in L^2(0,T)$ and $u_0\in C(D)\cap\dot H^\alpha(D)$ for some $\alpha\ge0$ with $u_0\ge0$ being not identically zero. Then, for any fixed $x_*\in D$, $\{q^2(t)\}_{t\in(0,T]}$ can be uniquely determined by the data $\{\mathbb{E}\left[\ln u(x_*,t)\right]\}_{t\in(0,T]}$.
\end{theorem}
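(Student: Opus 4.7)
\medskip

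The plan is to reduce the theorem directly to equation \eqref{eq:IP} by separating the deterministic and stochastic contributions to $\ln u(x_*,t)$. Since $u_0$ is a given datum of the problem, the deterministic profile $v$ solving \eqref{eq:v} is completely known; in particular, by Lemma \ref{lm:v}, $v(x_*,t)>0$ for every $t\in(0,T]$, so $\ln v(x_*,t)$ is a well-defined, known, smooth function of $t$. The factorization \eqref{eq:solu} then gives $u(x_*,t)=v(x_*,t)Z(t)$, and the strict positivity of $u$ established in the previous section allows us to take logarithms $\mathbb P$-a.s.

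First I would write
\[
\ln u(x_*,t)=\ln v(x_*,t)+\int_0^t q(s)\,dB(s)-\tfrac12\int_0^t q^2(s)\,ds.
\]
Next I would take expectations: because $q\in L^2(0,T)$, the It\^o integral $\int_0^t q(s)\,dB(s)$ is a square-integrable martingale with $\mathbb E[\int_0^tq(s)\,dB(s)]=0$, while $\ln v(x_*,t)$ is deterministic, so
\[
\mathbb E[\ln u(x_*,t)]=\ln v(x_*,t)-\tfrac12\int_0^t q^2(s)\,ds\quad\forall\,t\in(0,T].
\]
Rearranging yields the explicit inversion formula
\[
\tfrac12\int_0^t q^2(s)\,ds=\ln v(x_*,t)-\mathbb E[\ln u(x_*,t)],
\]
whose right-hand side is entirely determined by $u_0$ and the measured data. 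Because the map $t\mapsto \tfrac12\int_0^t q^2(s)\,ds$ determines $q^2\in L^1(0,T)$ a.e.\ by Lebesgue differentiation, this proves uniqueness. Equivalently, differentiating in $t$ recovers \eqref{eq:IP}.

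The result therefore follows with essentially no new work beyond what has already been assembled: the positivity of $u$ and $v$ (so that the logarithms make sense), the multiplicative factorization \eqref{eq:solu}, and the zero-mean property of the It\^o integral for $q\in L^2(0,T)$. The only point worth flagging as a minor technical hurdle is that uniqueness is being claimed pointwise in $t$, while the inversion formula naturally delivers $\tfrac12\int_0^tq^2(s)\,ds$; thus the conclusion should be understood in the sense of equality almost everywhere on $(0,T]$, which is the appropriate notion for $L^2(0,T)$ potentials. No compactness or stability argument is needed, since the formula is fully constructive.
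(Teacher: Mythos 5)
Your proof is correct and follows essentially the same route as the paper: both rest on the factorization \eqref{eq:solu}, the positivity of $u$ and $v$, and the zero-mean property of the It\^o integral, which together give $\mathbb E[\ln u(x_*,t)]-\ln v(x_*,t)=-\tfrac12\int_0^tq^2(s)\,ds$ and hence determine $q^2$ by differentiation. Your remark that the conclusion should be read as an a.e.\ identity (via Lebesgue differentiation, since $q^2$ is only in $L^1(0,T)$) is a small but legitimate refinement of the paper's pointwise statement of \eqref{eq:IP}.
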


\begin{proof}
Assume that $u_1$ and $u_2$ are solutions to \eqref{eq:model} corresponding to two potentials $q_1$ and $q_2$, respectively. If $\mathbb{E}[\ln u_1(x_*,t)]=\mathbb{E}[\ln u_2(x_*,t)]$ for any $t\in(0,T]$ and $x_*\in D$, then we have
\begin{align*}
q_1^2(t)-q_2^2(t)&=2\frac{d}{dt}\left(\mathbb{E}\left[\ln {\frac{u_2(x_*,t)}{v(x_*,t)}}\right]-\mathbb{E}\left[\ln {\frac{u_1(x_*,t)}{v(x_*,t)}}\right]\right)\\
&=2\frac{d}{dt}\left(\mathbb{E}\left[\ln u_2(x_*,t)\right]-\mathbb{E}\left[\ln u_1(x_*,t)\right]\right)
=0
\end{align*}
for any $t\in(0,T]$.
\end{proof}

\begin{remark}
 By Theorem \ref{iuniq}, the uniqueness can be obtained for the inverse potential problem if $q$ is nonnegative.
\end{remark}

\subsection{Regularization}\label{sec:regu}

Although  \eqref{eq:IP} provides an explicit formula to reconstruct $q^2$, it is unstable due to the temporal derivative of the data. To handle the instability issue, we consider two regularization methods for the numerical differentiation, the Tikhonov method \cite{QianFuFeng+2006} and the spectral cut-off method \cite{QianFuXW+2006}.

For a given function $\phi(t)$, let $\mathcal F(\phi)(\xi)=\hat{\phi}(\xi)$ and $\phi'(t)$ be the Fourier transform and the first order derivative of $\phi(t)$, respectively. Clearly, we have $\mathcal F(\phi')(\xi)={\rm i}\xi\hat{\phi}(\xi)$, where $\rm i$ is the imaginary unit. Below, we present two simple yet effective regularization methods to implement the numerical differentiation of the noisy data.

\subsubsection{Tikhonov regularization}

The Tikhonov regularization of the time derivative takes the form
\begin{align*}
\mathcal R_1(\phi')(t):=\frac{1}{\sqrt{2\pi}}\int_{-\infty}^{\infty}\frac{{\rm i}\xi}{1+(\mu\xi)^2}\hat{\phi}(\xi)e^{{\rm i}\xi t}d\xi,
\end{align*}
where $\mu>0$ is a regularization parameter. Given $\phi\in H^p(\mathbb R)$ for some $p>1$, it is shown in \cite[Lemma 3.2]{QianFuFeng+2006} that the regularized derivative $\mathcal R_1(\phi')$ satisfies the error estimate
\begin{align*}
\|\phi'-\mathcal R_1(\phi')\|_{L^2(\mathbb R)}&\le\sup_{\xi\in\mathbb R}\left(\frac{\mu^{2}|\xi|^3}{1+(\mu\xi)^2}(1+\xi^2)^{-\frac p2}\right)\|\phi\|_{H^p(\mathbb R)}\\
&\le\max\{\mu^{p-1},\mu^{-1}\}\|\phi\|_{H^p(\mathbb R)}.
\end{align*}

\subsubsection{Spectral cut-off regularization}

The spectral cut-off regularization for the time derivative can be written as
\begin{align*}
\mathcal R_2\left(\phi'\right)(t):=\frac{1}{\sqrt{2\pi}}\int_{-\xi_{\rm max}}^{\xi_{\rm max}}{\rm i}\xi\hat{\phi}(\xi)e^{{\rm i}\xi t}d\xi,
\end{align*}
where $\xi_{\rm max}>0$ is the truncation frequency and plays the role of regularization. Given  $\phi\in H^p(\mathbb R)$ for some $p>1$, it is shown in \cite[Lemma 3.2]{QianFuXW+2006} that the spectral cut-off regularization admits the error estimate
\begin{align*}
\|\phi'-\mathcal R_2(\phi')\|_{L^2(\mathbb R)}&=\left(\int_{|\xi|>\xi_{\rm max}}\frac{\xi^2}{(1+\xi^2)^p}(1+\xi^2)^p|\hat{\phi}(\xi)|^2d\xi\right)^{\frac12}\\
&\le\xi_{\rm max}^{-(p-1)}\|\phi\|_{H^p(\mathbb R)}.
\end{align*}

\section{Numerical experiments}

In this section, we present some numerical experiments for the one-dimensional inverse problem. Several examples are reported to demonstrate the effectiveness of the proposed methods.

\subsection{The synthetic data}

The synthetic data is generated by solving the direct problem numerically and then perturbed by a random noise to test the stability of the methods. We consider the one-dimensional problem with the domain given by $D\times(0,T]=(-a,a)\times(0,T]$, which is discretized into $M$ and $N$ subintervals with uniform step-sizes $h=\frac{2a}M$ and $\tau=\frac TN$ in the spatial and temporal directions, respectively. Denote by
\begin{equation}\label{eq:node}
\begin{aligned}
x_{m}=&~mh,\quad m=0,\pm1,\ldots,\pm M,\\
t_{n}=&~n\tau,\quad \,\,n=0,1,\ldots,N
\end{aligned}
\end{equation}
the discrete points in the spatial and temporal intervals, respectively.

\subsubsection{Numerical approximations}

In \eqref{eq:IP}, it is required to solve the deterministic diffusion equation \eqref{eq:v} and the stochastic diffusion equation \eqref{eq:model} to obtain $v$ and $u$, respectively.

Applying the central difference scheme in space and the backward Euler scheme in time to the deterministic diffusion equation \eqref{eq:v}, we get
\begin{equation}\label{fdm}
\left\{
\begin{aligned}
D_t^+v_m^n=&~D_x^+D_x^-v_m^{n+1},\quad &&m=0,\cdots,\pm(M-1),~n=0,\dots,N-1,\\
v^{n}_{-M}=&~v^{n}_{M}=0, \quad &&n=1,2,\dots,N,\\
v^{0}_{m}=&~u_0(x_{m}), \quad &&m=0,\dots,\pm M,
\end{aligned}
\right.
\end{equation}
where the numerical solution $v_m^n$ is an approximation of $v(x_m,t_n)$ and
\[
D_t^+v_m^n:=\frac{v_m^{n+1}-v_m^n}\tau,\quad D_x^+v_m^n:=\frac{v_{m+1}^n-v_m^n}h,\quad D_x^-v_m^n:=\frac{v_{m}^n-v_{m-1}^n}h
\]
stand for the difference operators. By the classical results (cf. \cite[$(3.77)$]{JS14}), the finite difference scheme \eqref{fdm} is unconditionally stable and admits the error estimate
\[
\max_{1\le n\le N}\|V(t_n)-V^n\|_h\lesssim h^2+\tau,
\]
where $V(t_n)=(v(x_{-M},t_n),\dots,v(x_M,t_n))^\top$, $V^n=(v_{-M}^n,\dots,v_M^n)^\top$, and $\|\cdot\|_h=h^{\frac12}\|\cdot\|_{l^2}$ is the weighted $l^2$-norm for vectors.

For the stochastic diffusion equation \eqref{eq:model}, employing the central difference scheme in space and the implicit Euler--Maruyama scheme in time, we have
\begin{equation}\label{eq:num}
\left\{
\begin{aligned}
D_t^+u^{n}_{m}=&~D_x^+D_x^-u^{n+1}_{m}+q(t_{n})u^{n}_{m}D_t^+B(t_{n}),\quad &&m=0,\dots,\pm(M-1),~n=0,\dots,N-1,\\
u^{n}_{-M}=&~u^{n}_{M}=0, \quad &&n=1,2,\dots,N,\\
u^{0}_{m}=&~u_0(x_{m}),\quad &&m=0,\dots,\pm M,
\end{aligned}
\right.
\end{equation}
where
\[
D_t^+B(t_n)=\frac{B(t_{n+1})-B(t_n)}\tau\sim\sqrt{\tau}\eta_{n+1},\quad n=0,\cdots,N-1
\]
with $\{\eta_n\}_{n=1,\cdots,N}$ being independent and identically distributed standard normal random variables. The notation $a\sim b$ means that $a$ and $b$ have the same distribution. We mention that the stochastic It\^o integral should be approximated by evaluating the integrant at the left endpoint $t_n$ on each subinterval $[t_n,t_{n+1}]$. If the initial value is smooth enough, e.g., $u_0\in C^3(D)$, the numerical scheme \eqref{eq:num} is also unconditionally stable and has the error estimate
\[
\left(\mathbb E\|U(t_n)-U^n\|_h^2\right)^{\frac12}\lesssim h^2+\tau^{\frac12},
\]
where the notations $U(t_n)$ and $U^n$ are defined similarly as $V(t_n)$ and $V_n$. We refer to \cite[Theorem 3.1 ${\rm (iii)}$]{G99} for the detailed convergence analysis of the scheme \eqref{eq:num}.

\subsubsection{Noisy data and regularization}

For any fixed interior point $x_*\in D$, without loss of generality, we may assume that $x_*=x_m$ is one of the grid points in \eqref{eq:node}. Then the data $\{u(x_m,t)\}_{t\in(0,T]}$ can be generated as $\{u_m^n\}_{n=0,\dots,N}$ by implementing the numerical scheme \eqref{eq:num}. In addition, we add some random noise to the data in order to test the stability of the reconstructions. Let $u^{n,\epsilon}_m$ be the noisy data given by
\begin{align*}
u^{n,\epsilon}_m=u_m^n\left(1+\epsilon\zeta_n\right),\quad n=0,\dots,N,
\end{align*}
where $\epsilon>0$ is the noise level and $\{\zeta_n\}_{n=0,\cdots,N}$ are independent standard normal random variables with mean zero and variance one.

Once $v_m^n, u^{n, \epsilon}_m, n=0,\dots, N$ are available, we may define the data points
\begin{equation}\label{psin}
\psi^n:=\mathbb{E}\left[\ln {\frac{u^{n,\epsilon}_m}{v_m^n}}\right]=\mathbb E[\ln u^{n,\epsilon}_m]-\ln v_m^n,\quad n=0,\dots,N
\end{equation}
and the linearly interpolated data function
\begin{align}\label{psifun}
\psi(t)=\frac{t_{n+1}-t}{\tau}\psi^n+\frac{t-t_n}{\tau}\psi^{n+1},\quad t\in[t_n,t_{n+1}].
\end{align}

The function $\psi$ needs to be periodically extended such that the fast Fourier transform (FFT) can be applied to compute its derivative efficiently. For example, the domain of $\psi$ can be extended from $[0,T]$ to $[-T,2T]$. We adopt the cubic smoothing spline developed in \cite{Elden2000} and denote the extension by $\Psi$, which satisfies $\Psi(-T)=\Psi(2T)=0$. Specifically, the extension to the domain $[T,2T]$ is generated by using a cubic smoothing spline based on the last 16 components in $\{\psi^n\}_{n=0,\cdots,N}$ and assigning the last 7 components in $\{\psi^n\}_{n=N+1,\cdots,2N}$ to be zeros. Then the extended data $\{\psi^n\}_{n=N+1,\cdots,2N}$ are obtained by the interpolation. The extension to the domain $[-T,0]$ can be constructed similarly.

Based on \eqref{eq:IP} and \eqref{psifun}, the function $q^2$ can be approximated by $-2\psi'$, where the derivative $\psi'$ is computed numerically by using the regularization methods introduced in section \ref{sec:regu} together with the FFT. 

\subsection{Numerical examples}

In this section, we present three numerical examples to illustrate the performance of the reconstruction for potentials with different regularity.

In all the experiments, we take the following setup: the computational domain is $[-a,a]\times[0,T]$ with $a=1$ and $T=1$, the initial condition of \eqref{eq:model} is chosen as $u_0(x)=e^{-16x^2}$, the observation point $x_*=x_0=0$,  and the numbers of subintervals $M=50$ and $N=2^7$. In practice, the expectation in \eqref{eq:IP} is approximated by the average of $P$ realizations, where the choice of $P$ will be specified in the following examples.

\subsubsection{Example 1}

The exact potential function is $q(t)=\sin(\pi t)$, $t\in[0, 1]$. Using this potential function as a representative example, we examine the influence of various parameters on the reconstructions and present the corresponding numerical results.

\begin{figure}
  \includegraphics[width=0.32\textwidth]{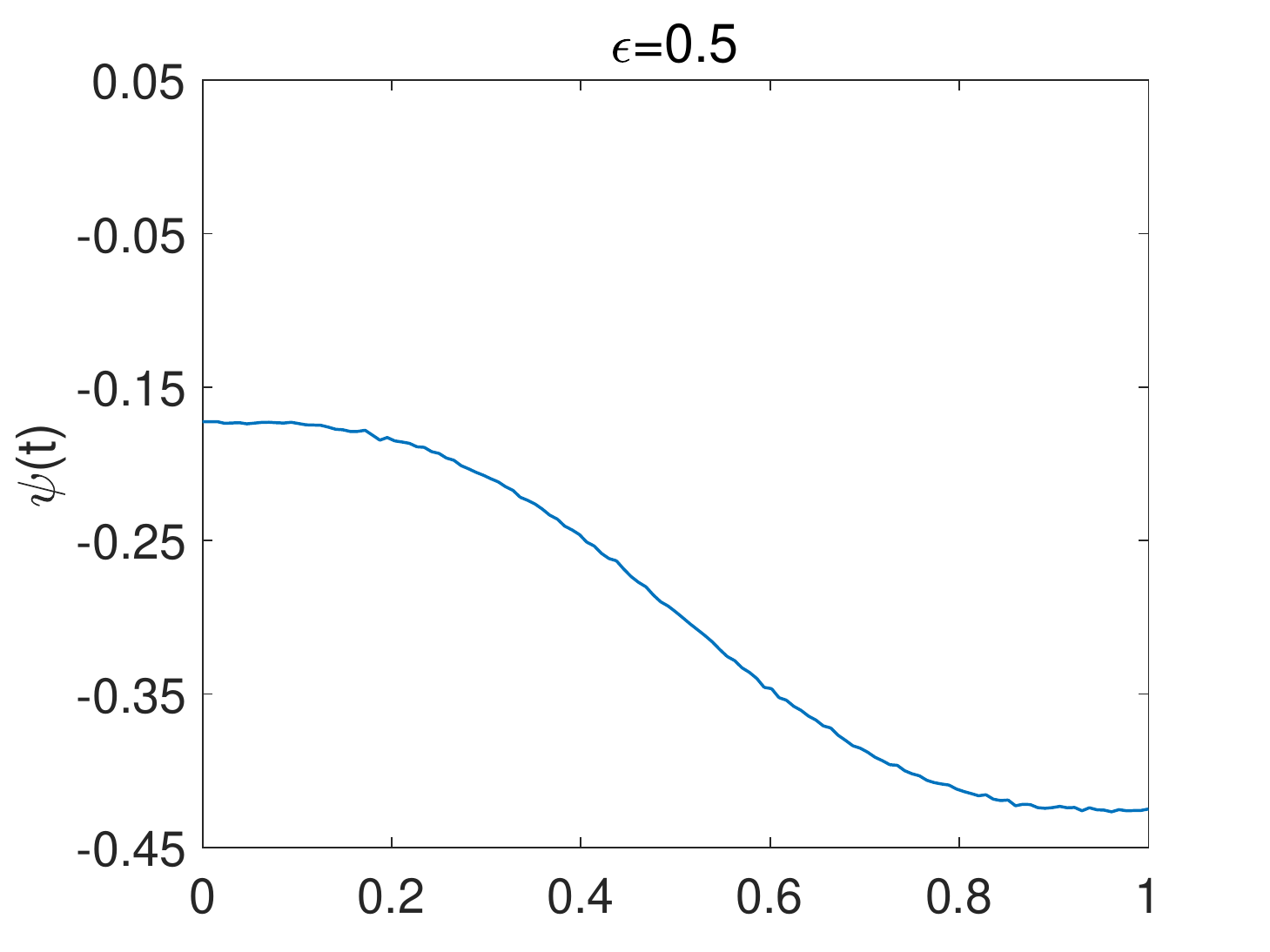}
  \includegraphics[width=0.32\textwidth]{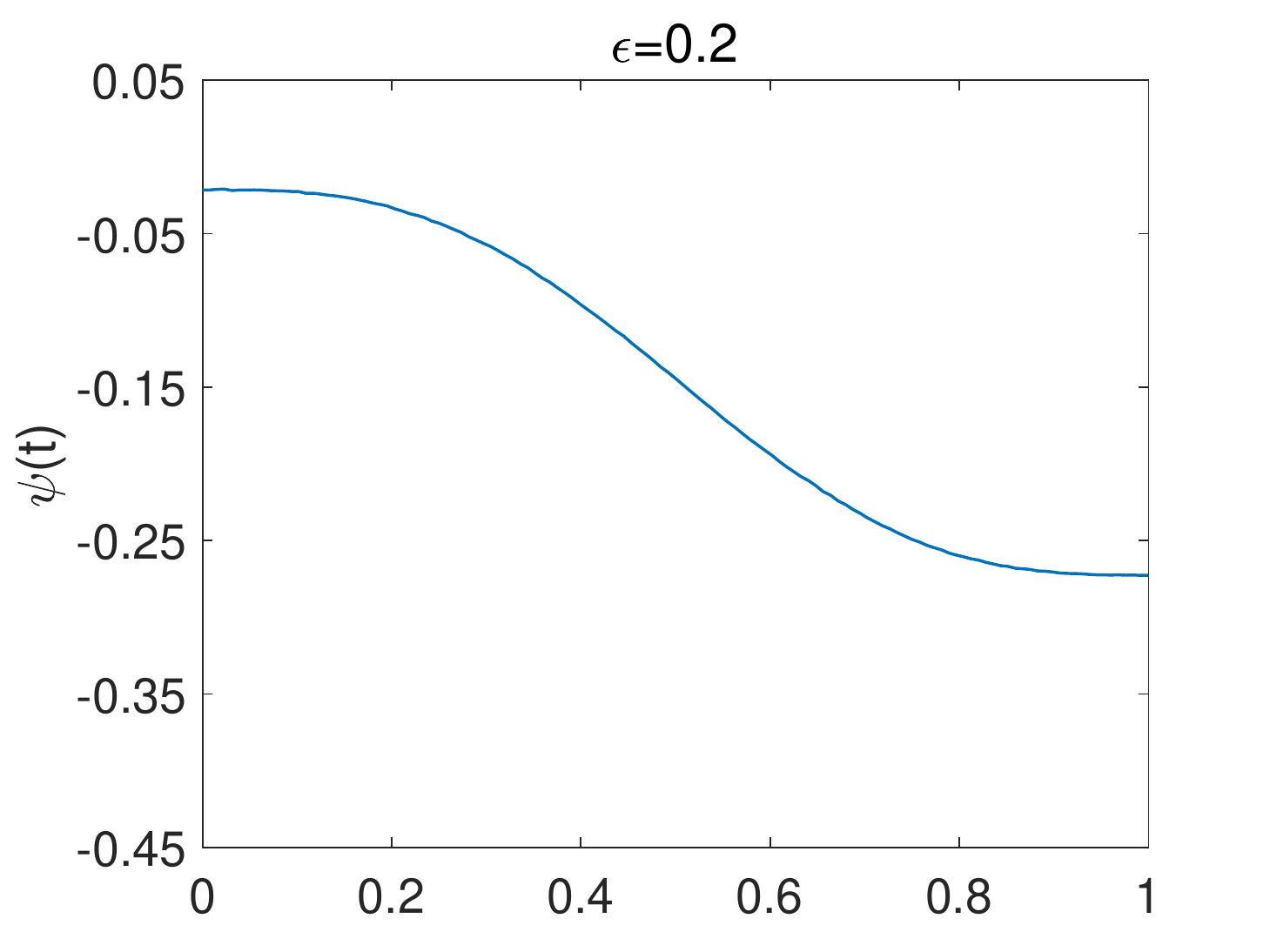}
  \includegraphics[width=0.32\textwidth]{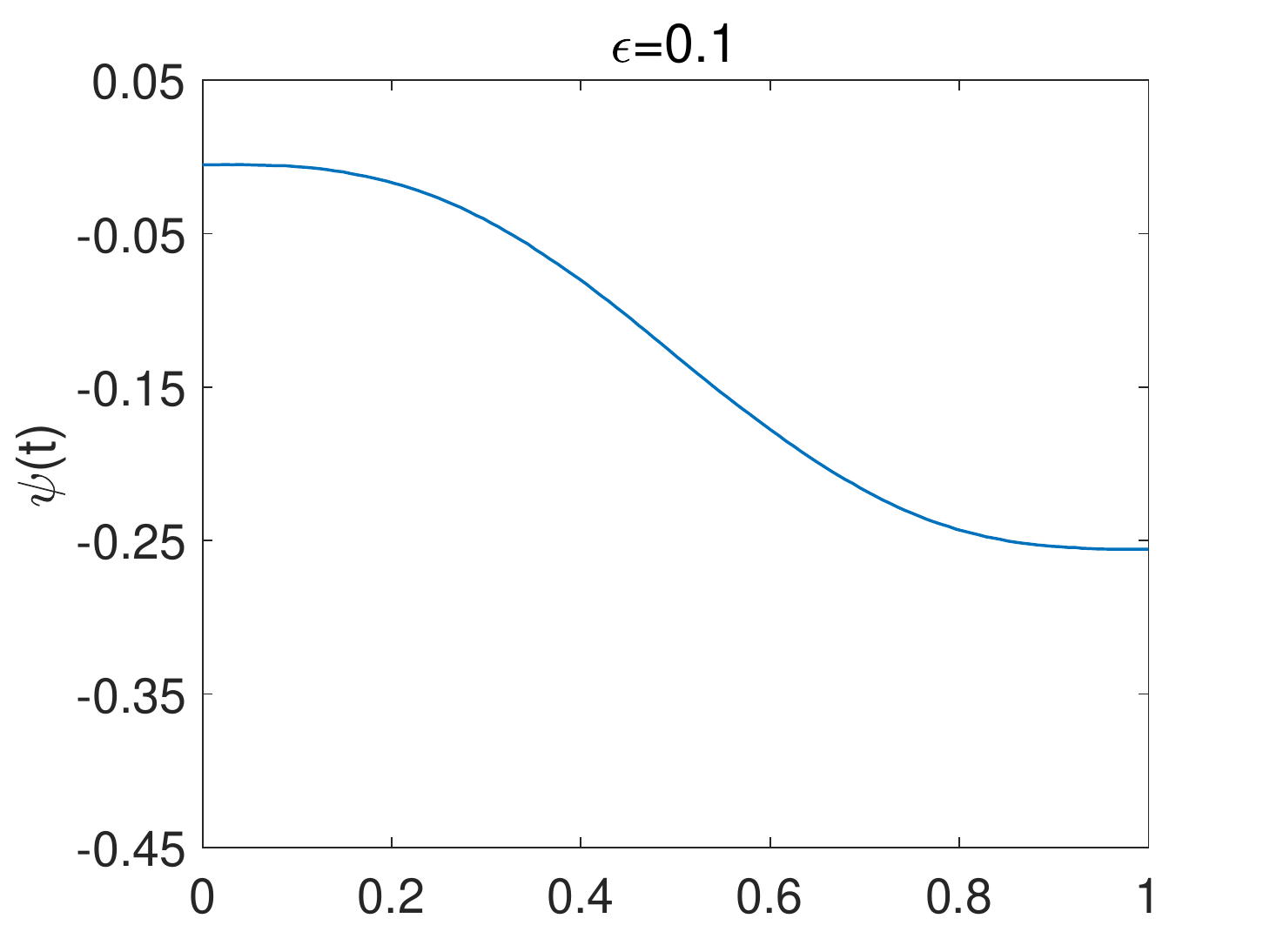}\\
  \includegraphics[width=0.32\textwidth]{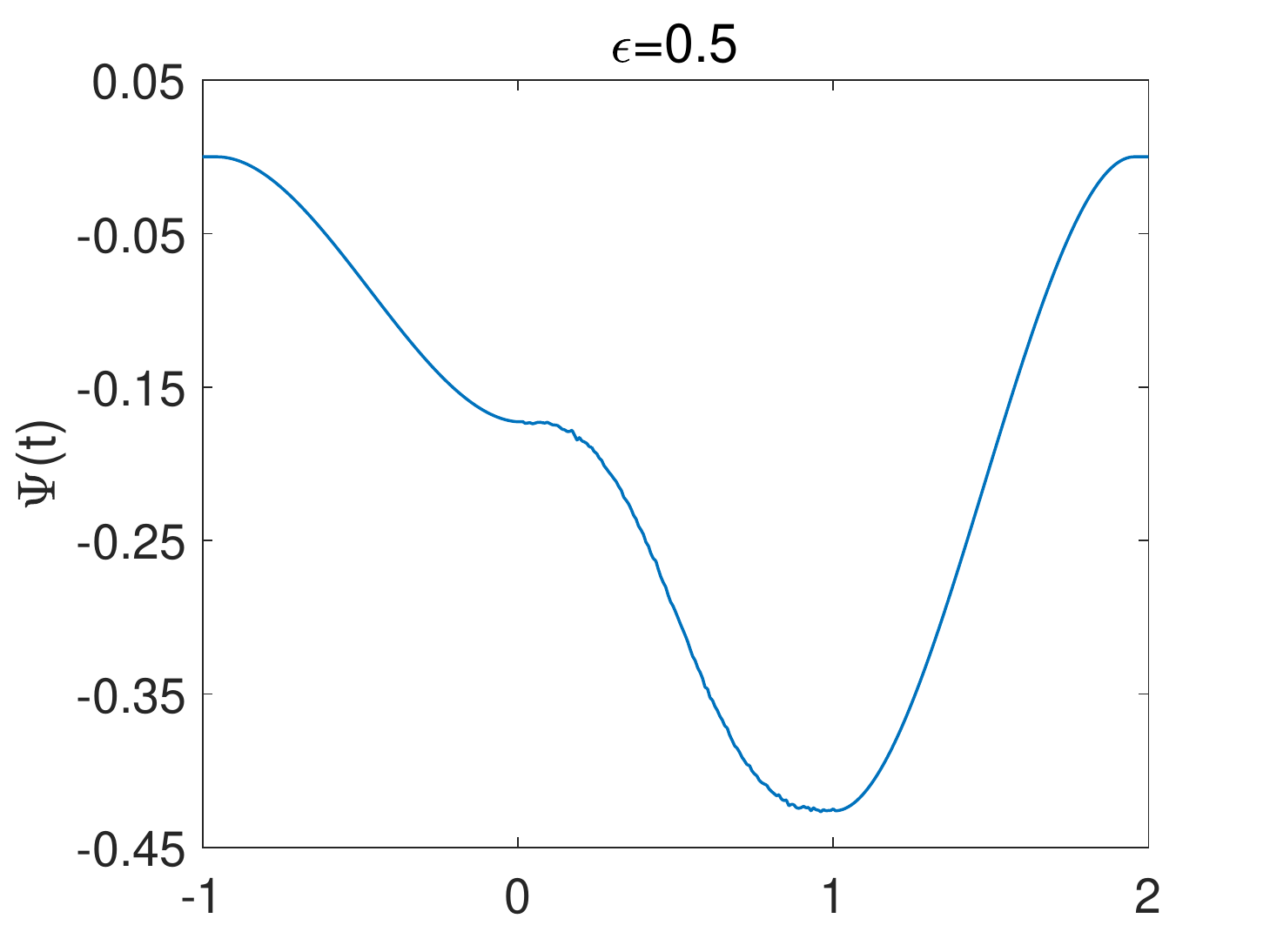}
  \includegraphics[width=0.32\textwidth]{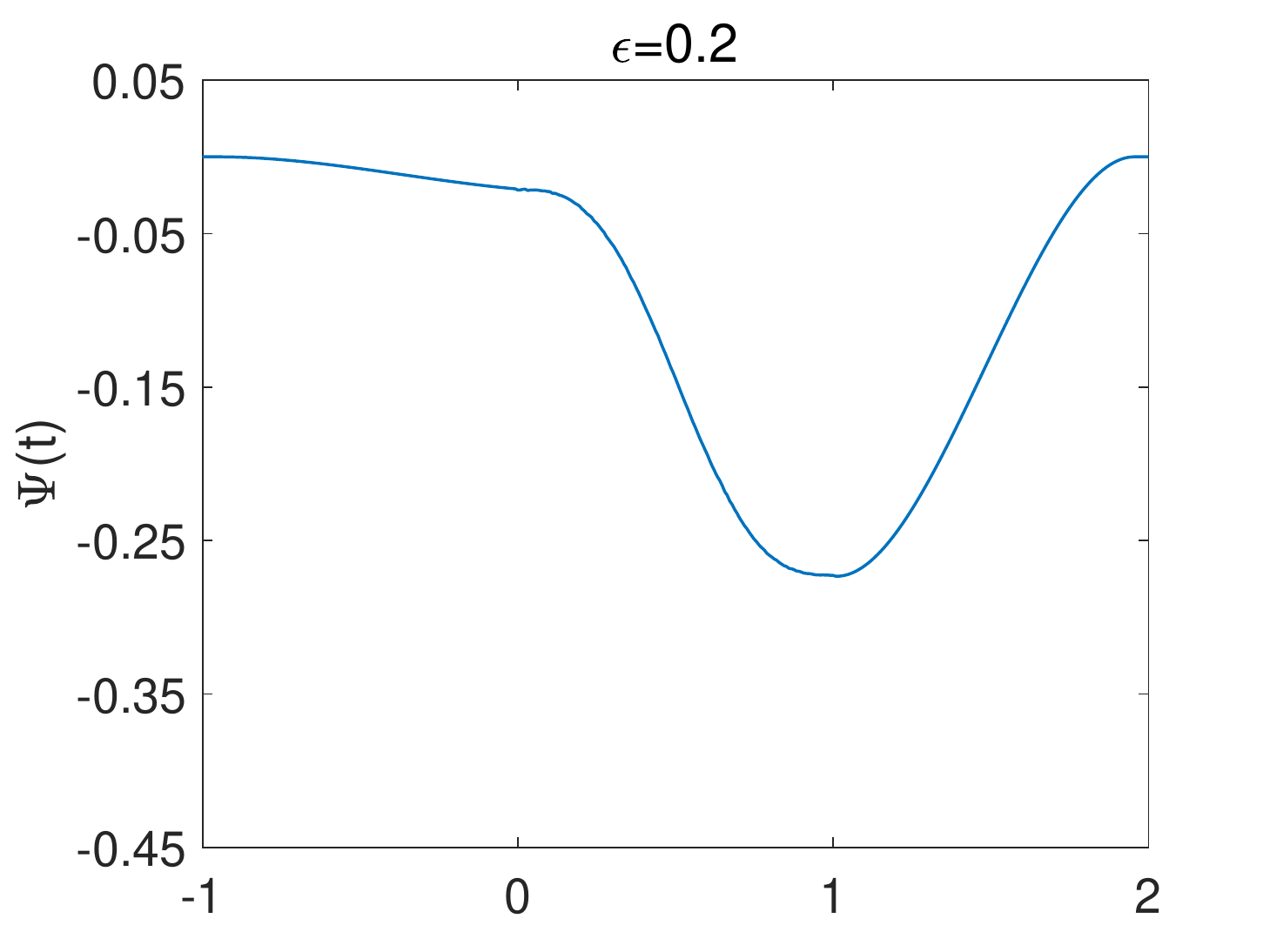}
  \includegraphics[width=0.32\textwidth]{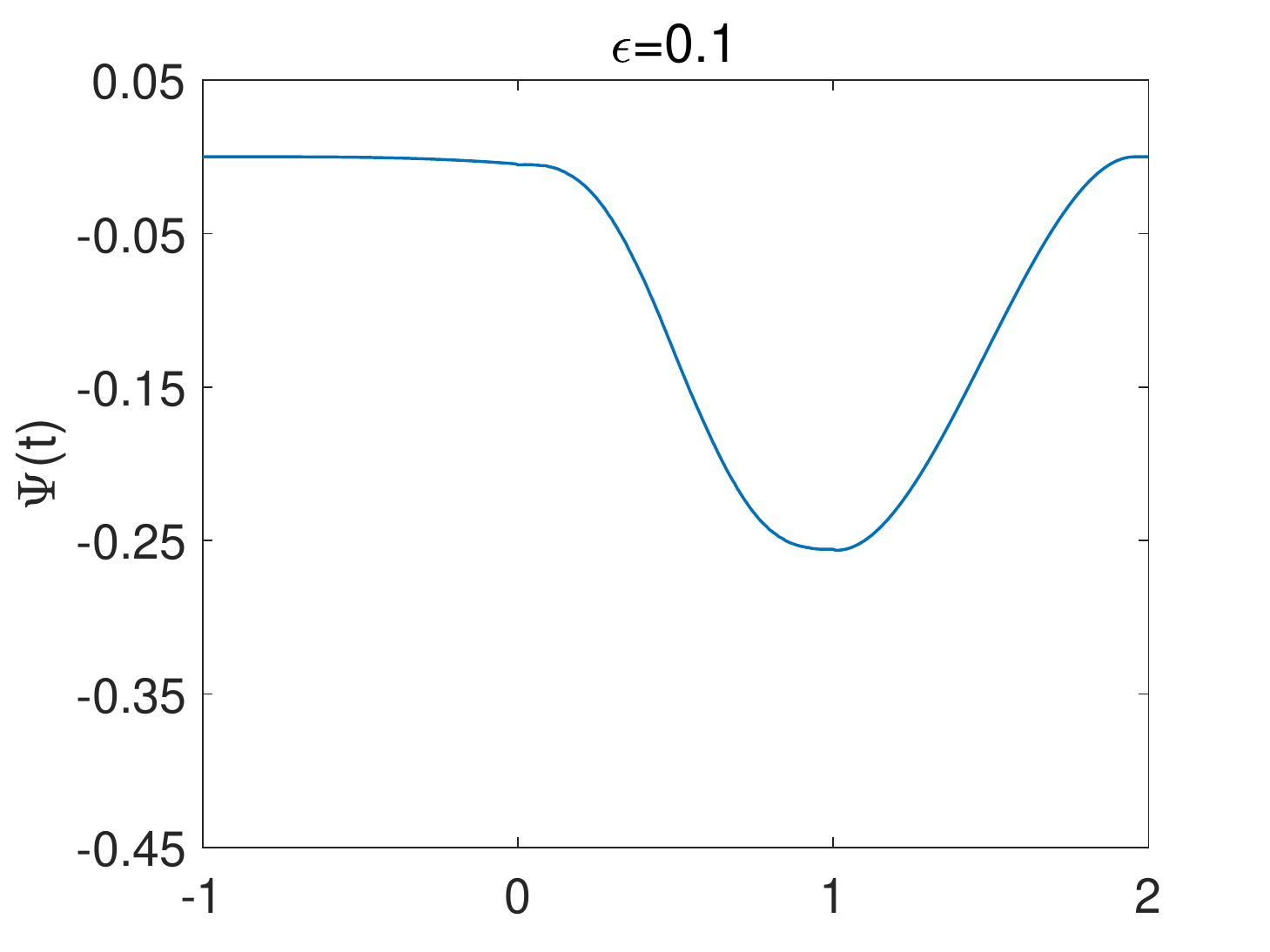}
  \caption{Example 1: the data function $\psi(t)$ on $[0,1]$ (top row) and the corresponding periodization $\Psi(t)$ on $[-1,2]$ (bottom row) at different noise levels ($\epsilon=0.5, 0.2, 0.1$) with a fixed number of realizations ($P=10^6$).}\label{e1psip6}
\end{figure}

\begin{figure}
  \includegraphics[width=0.32\textwidth]{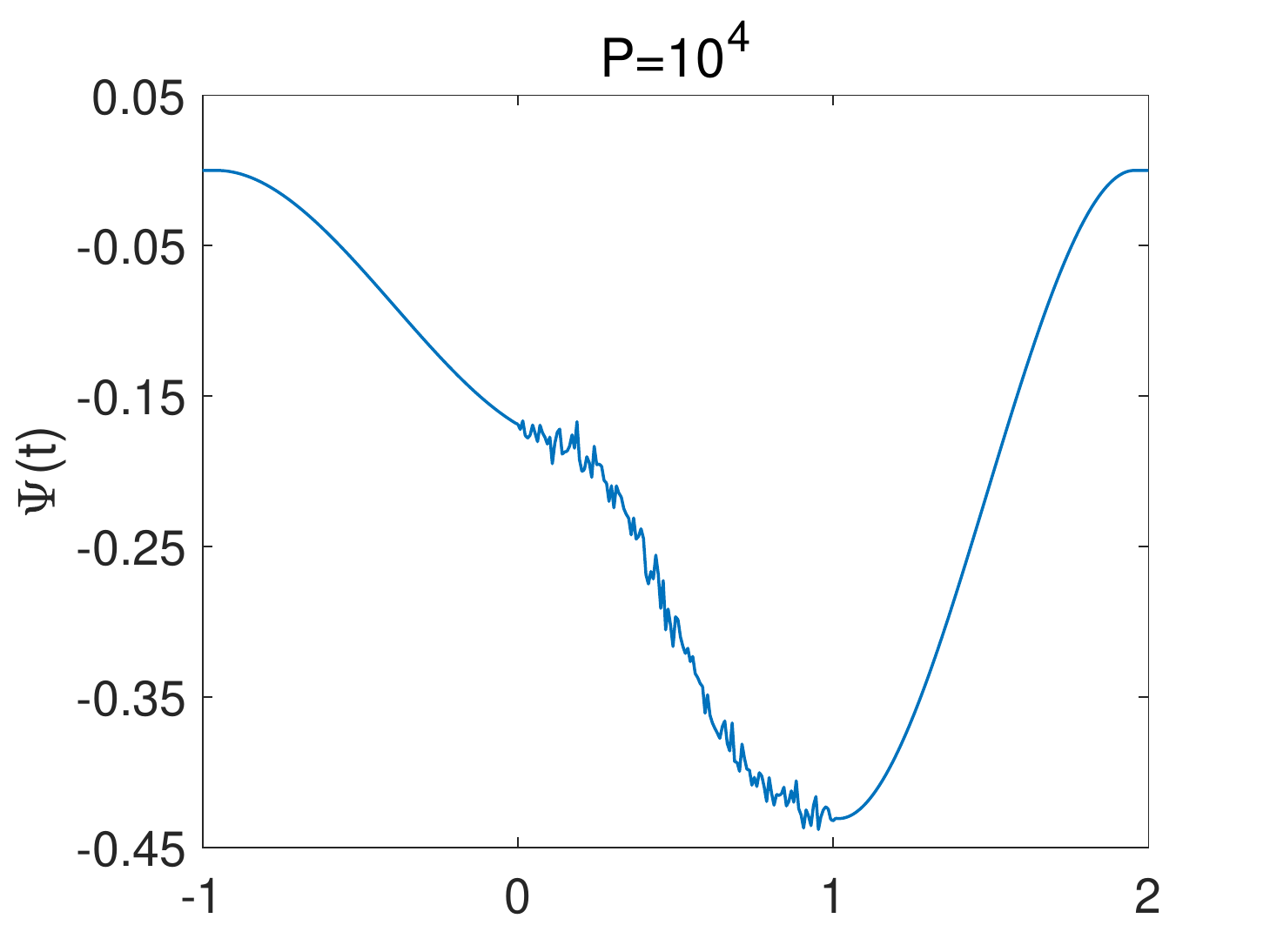}
  \includegraphics[width=0.32\textwidth]{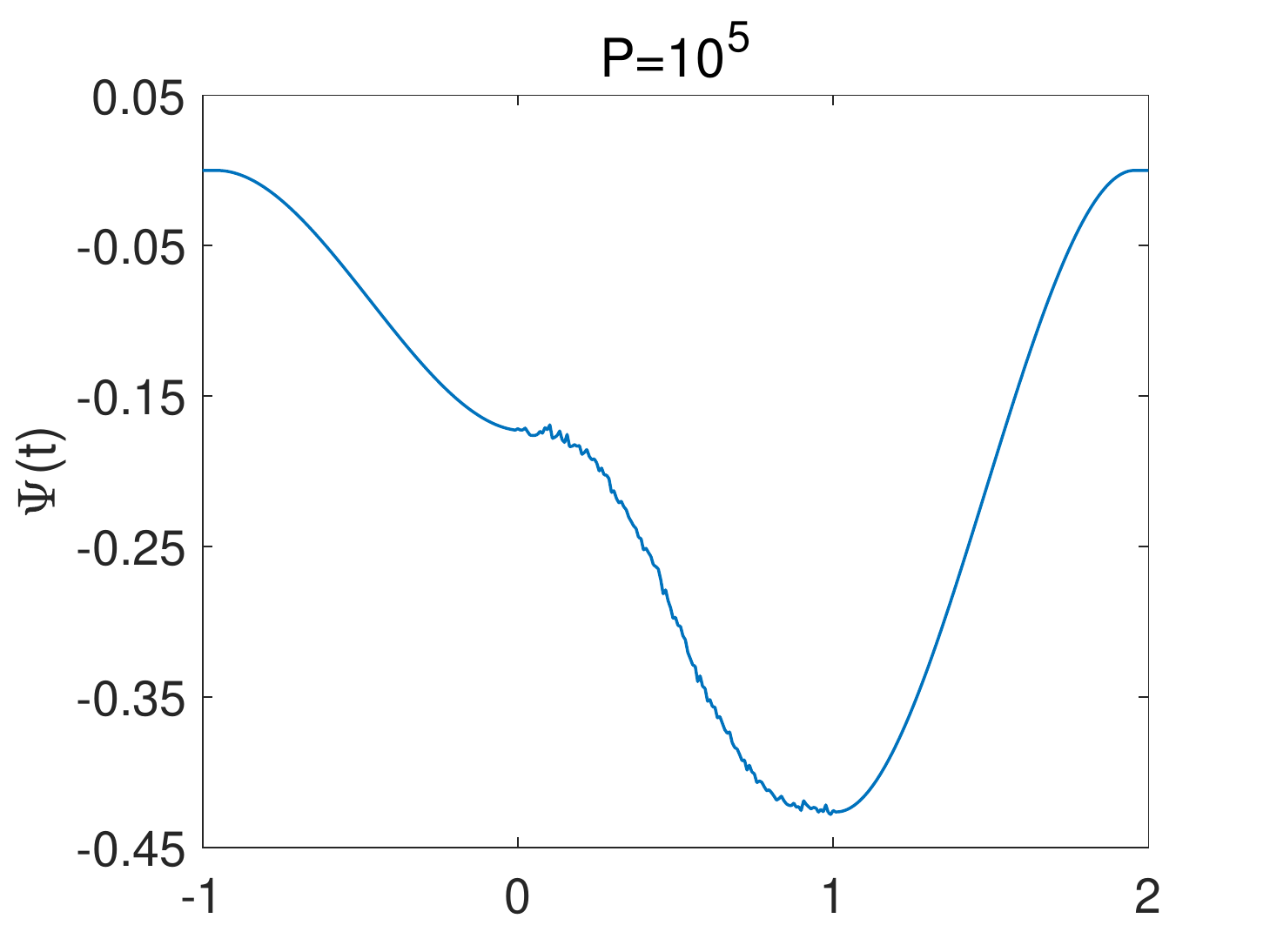}
  \includegraphics[width=0.32\textwidth]{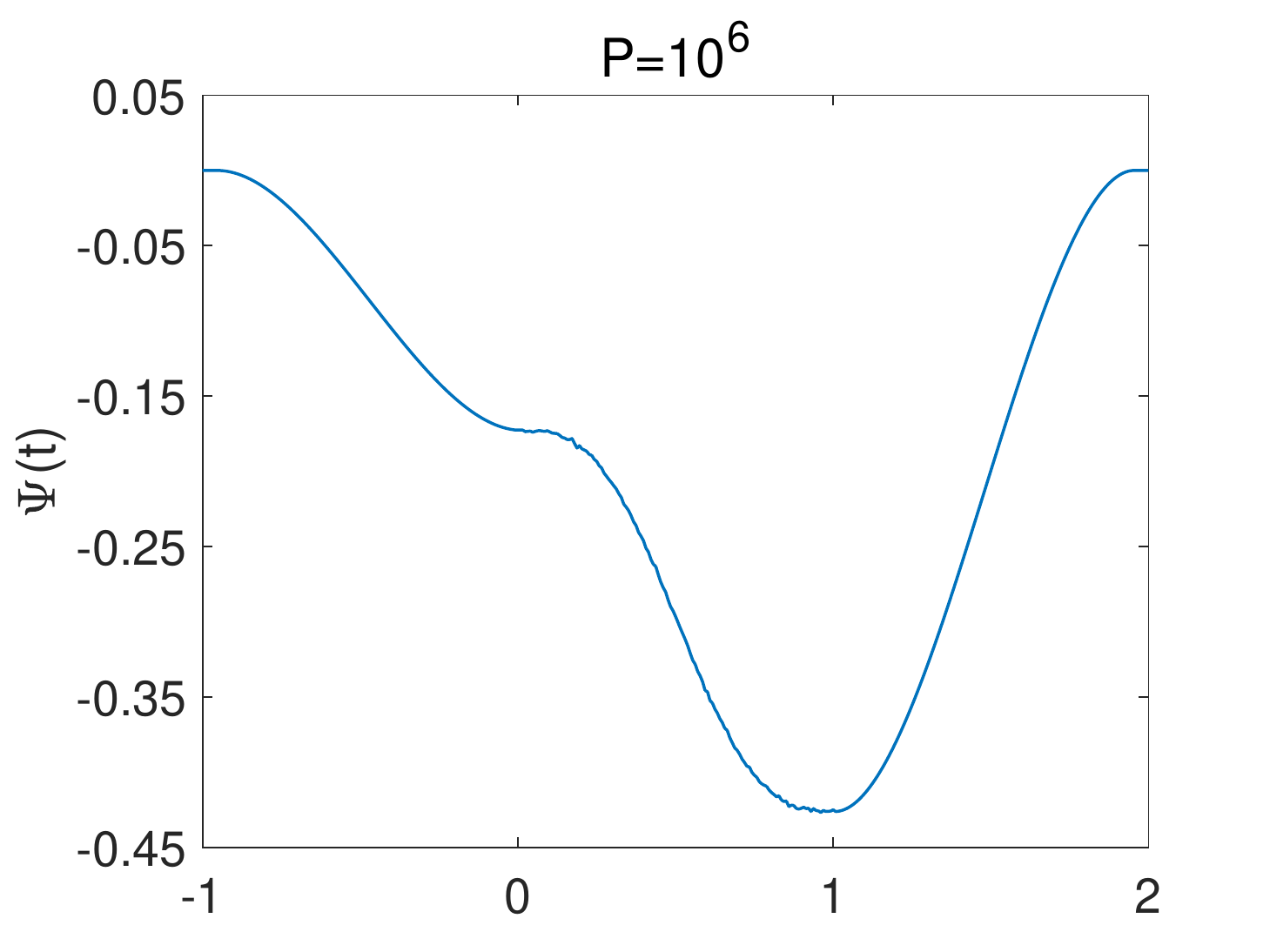}
  \caption{Example 1: the periodized data function $\Psi(t)$ on $[-1,2]$ with a different number of realizations ($P=10^4,10^5,10^6$) at a fixed noise level ($\epsilon=0.5$).}\label{e1psip54}
\end{figure}

\begin{figure}
  \includegraphics[width=0.32\textwidth]{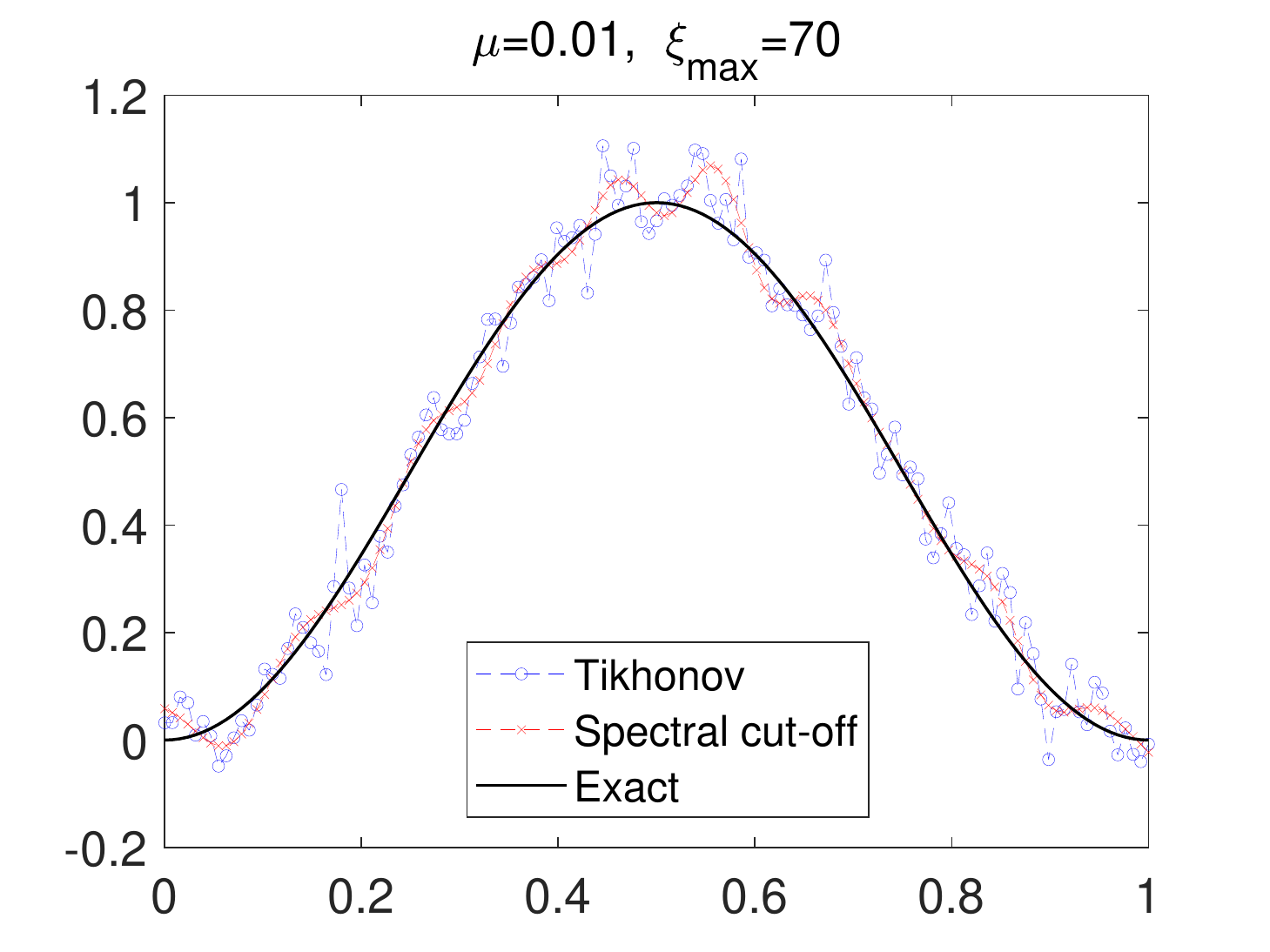}
  \includegraphics[width=0.32\textwidth]{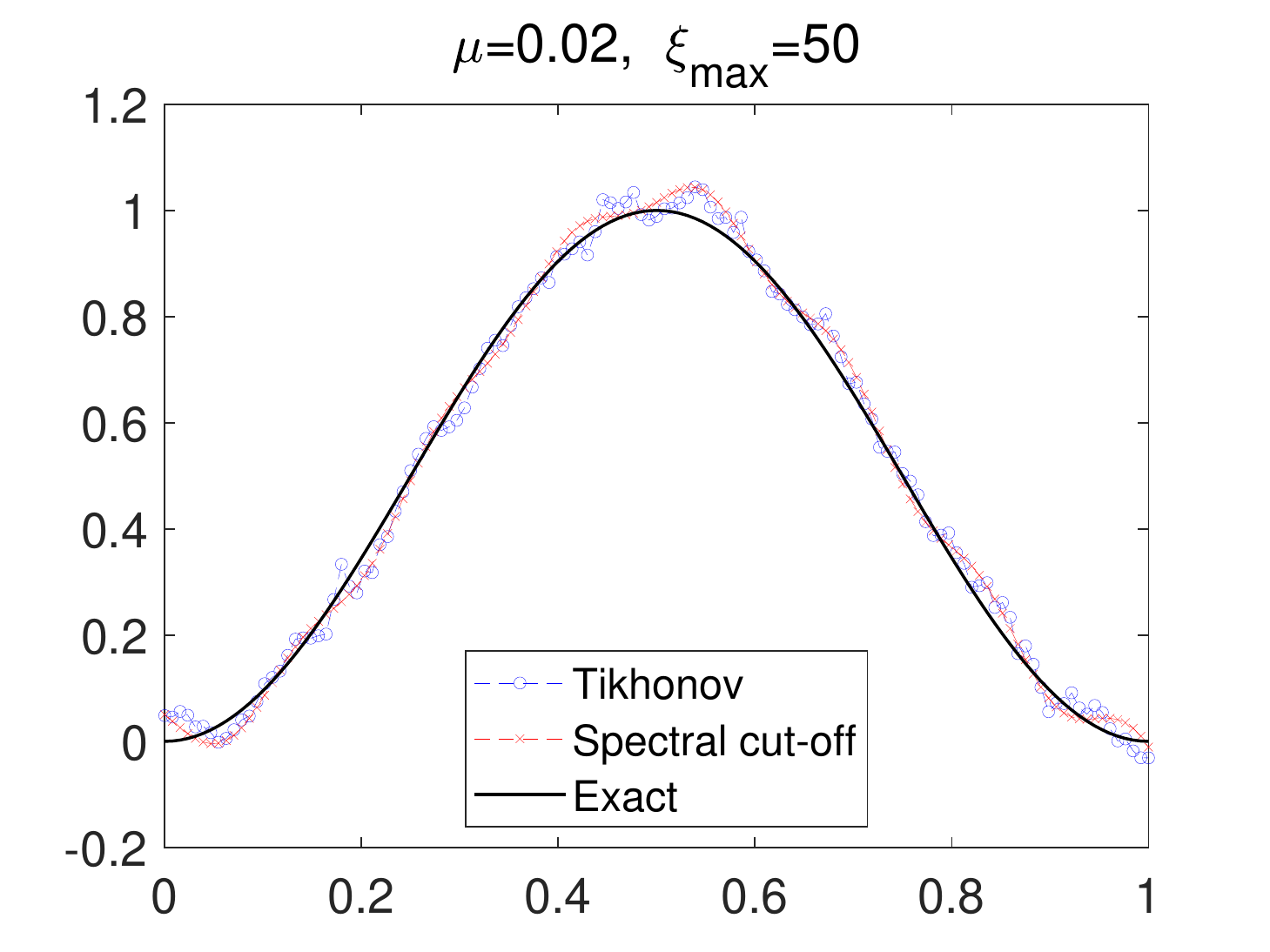}
  \includegraphics[width=0.32\textwidth]{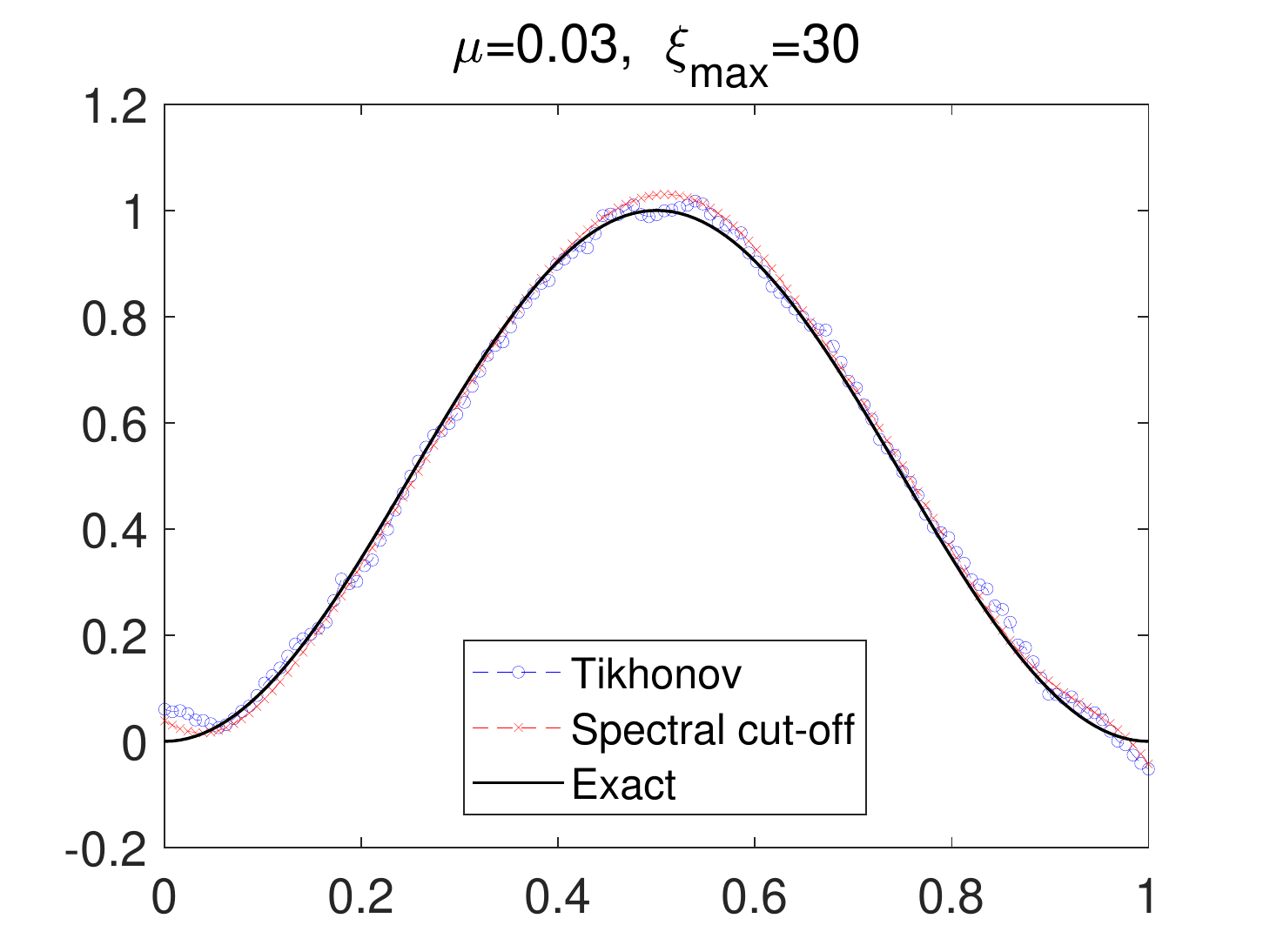}\\
  \includegraphics[width=0.32\textwidth]{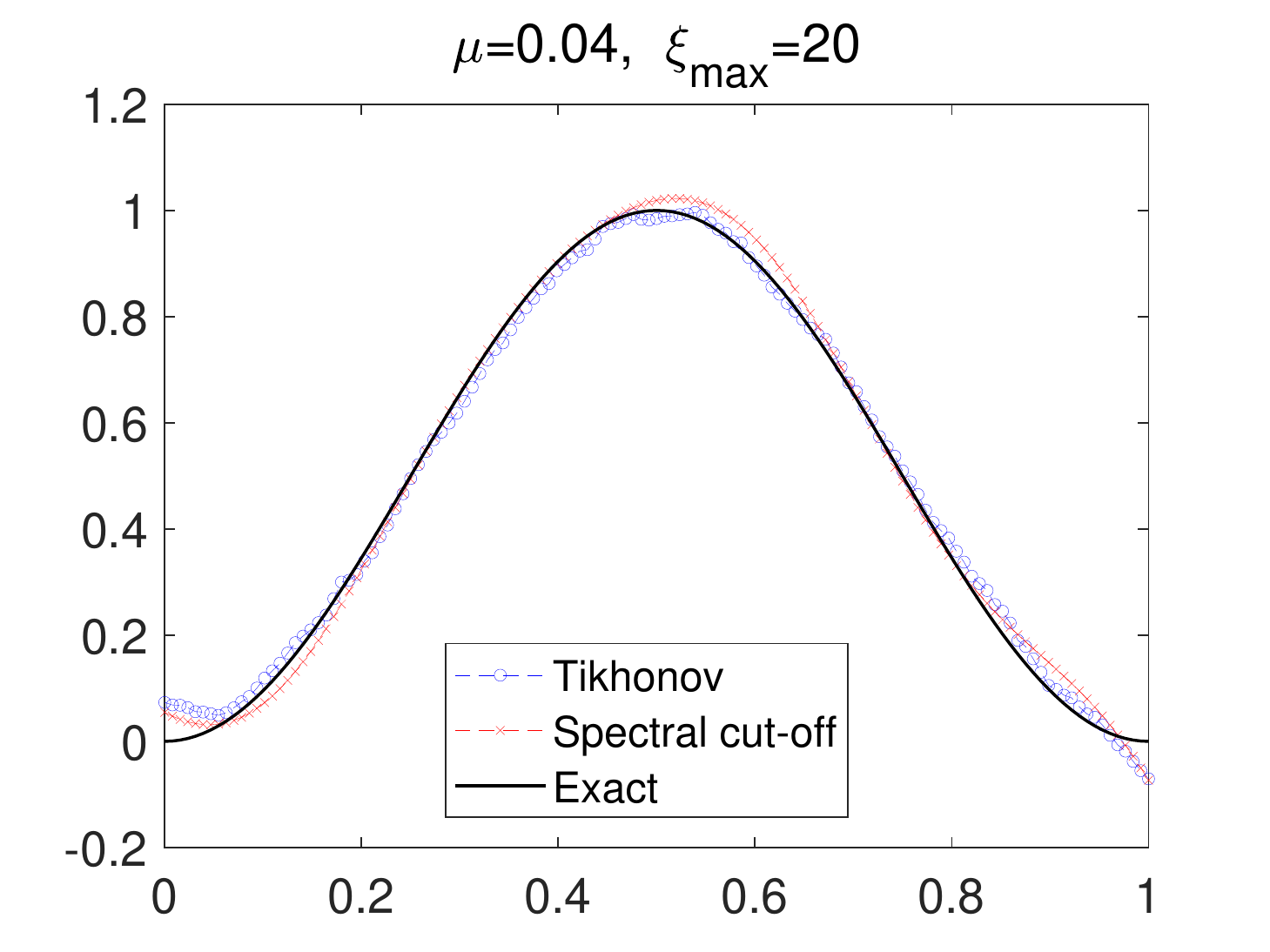}
  \includegraphics[width=0.32\textwidth]{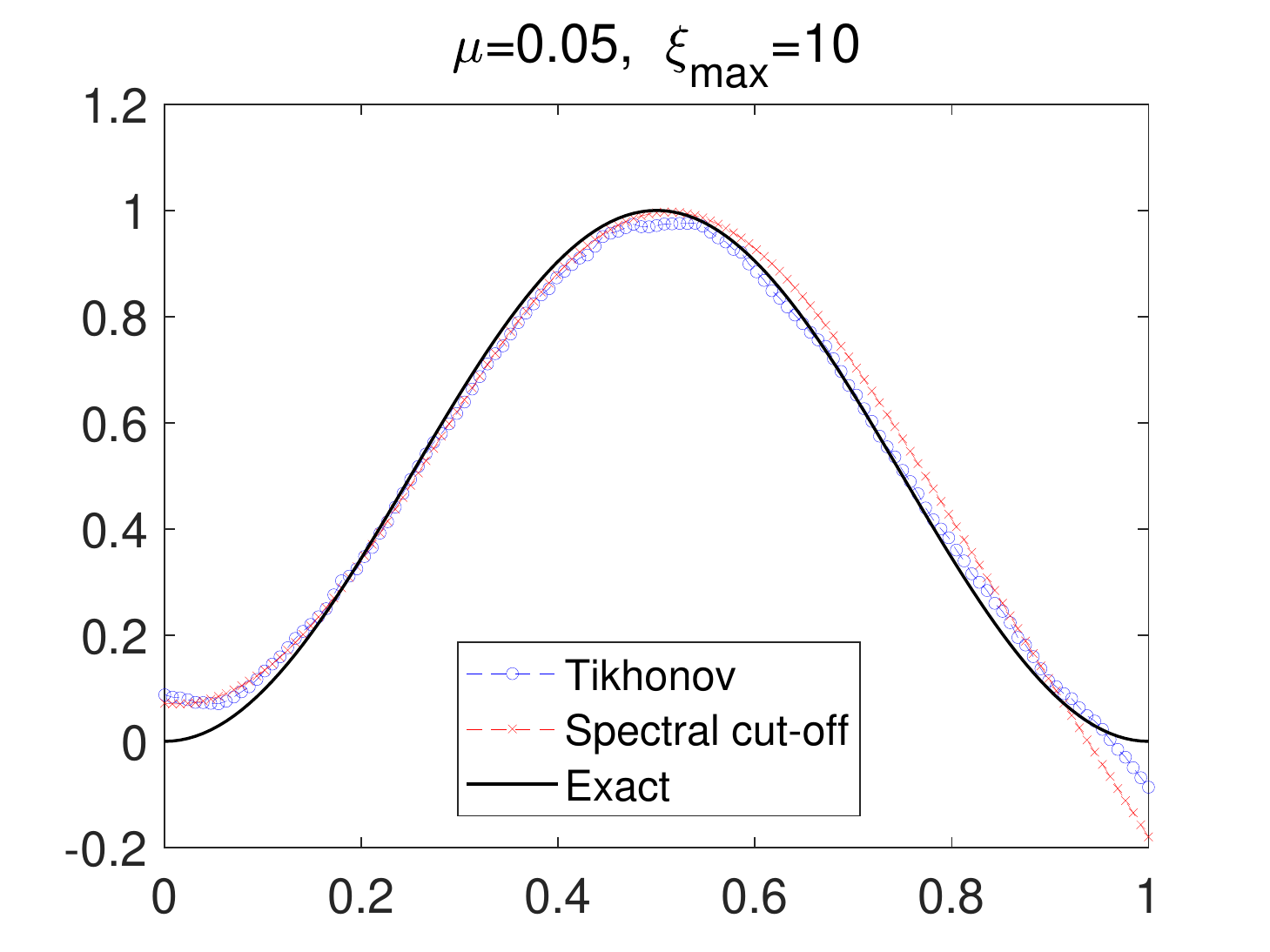}
  \includegraphics[width=0.32\textwidth]{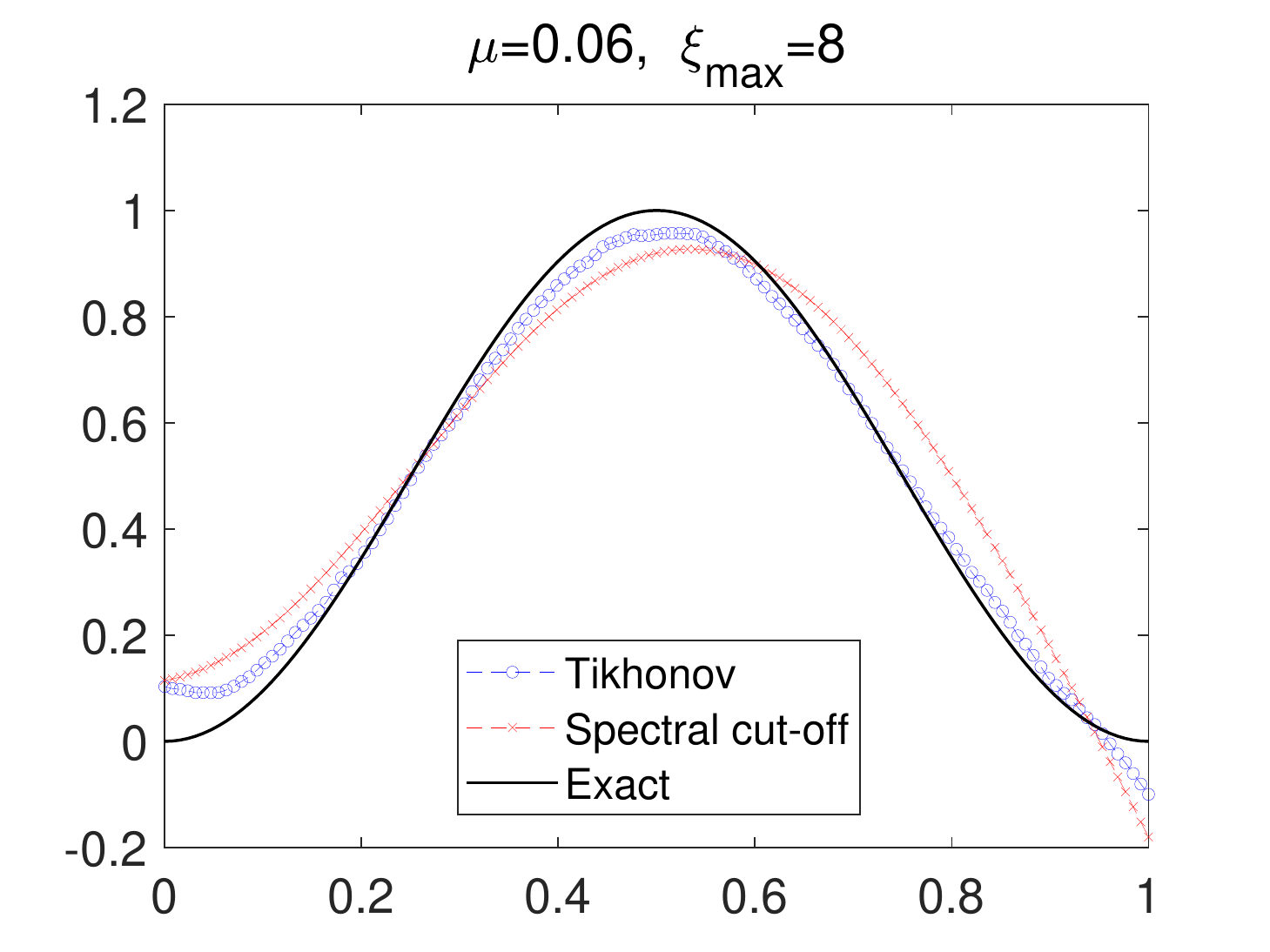}
  \caption{Example 1: the reconstruction of $q^2$ with different regularization parameters at a fixed noise level ($\epsilon=0.5$) and a fixed number of realizations ($P=10^6$).}\label{e105p6}
\end{figure}

\begin{figure}
  \includegraphics[width=0.32\textwidth]{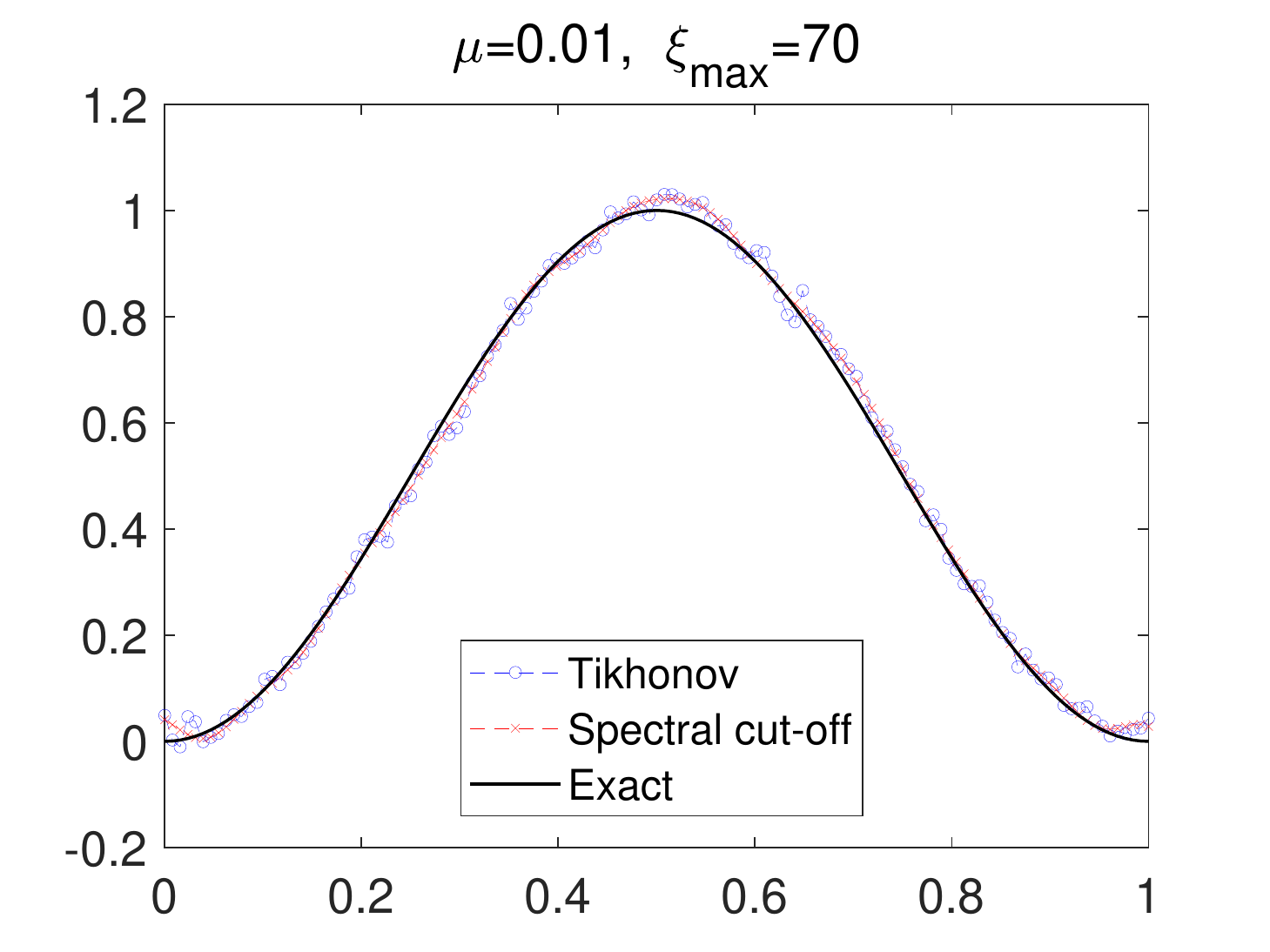}
  \includegraphics[width=0.32\textwidth]{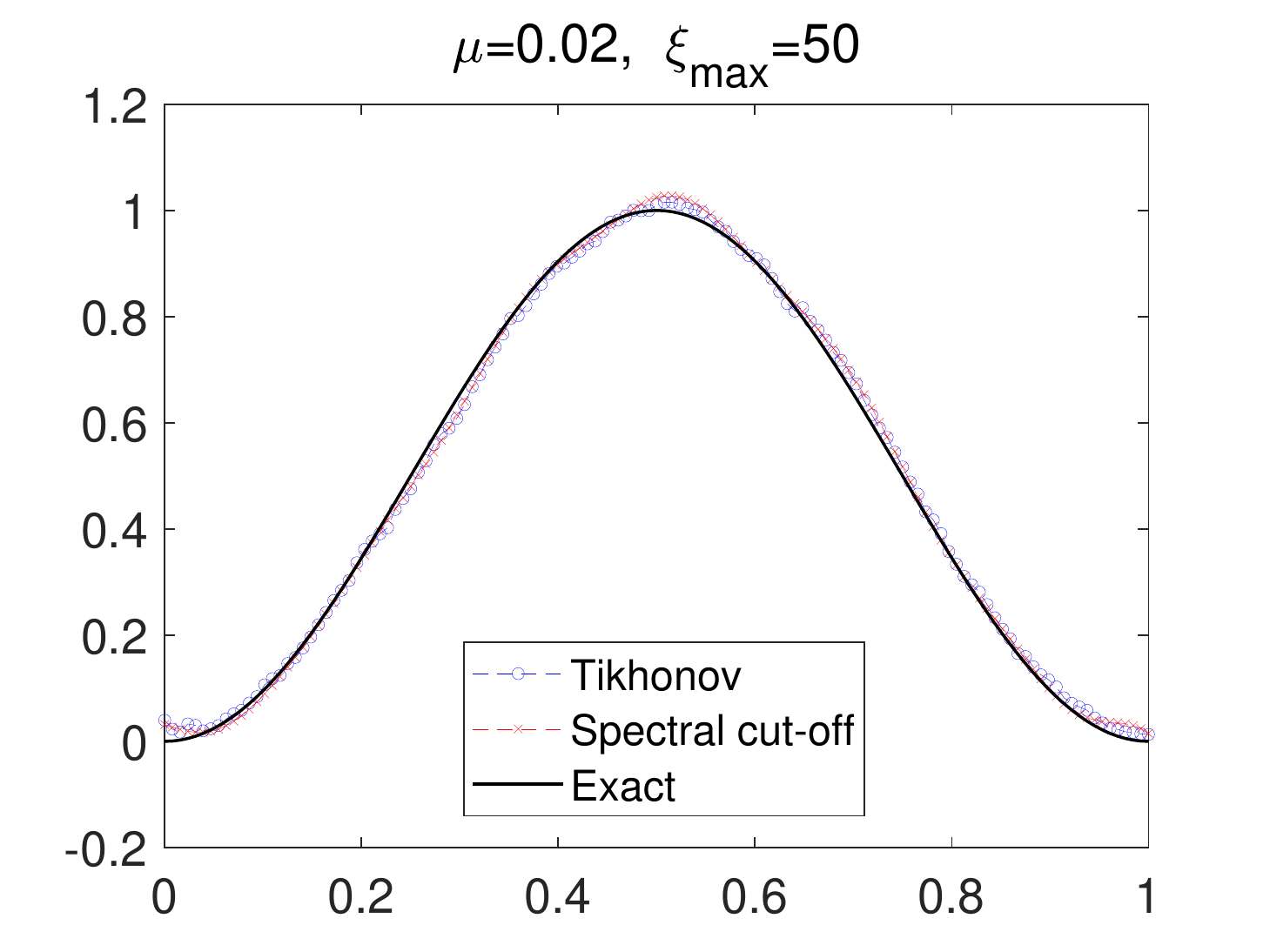}
  \includegraphics[width=0.32\textwidth]{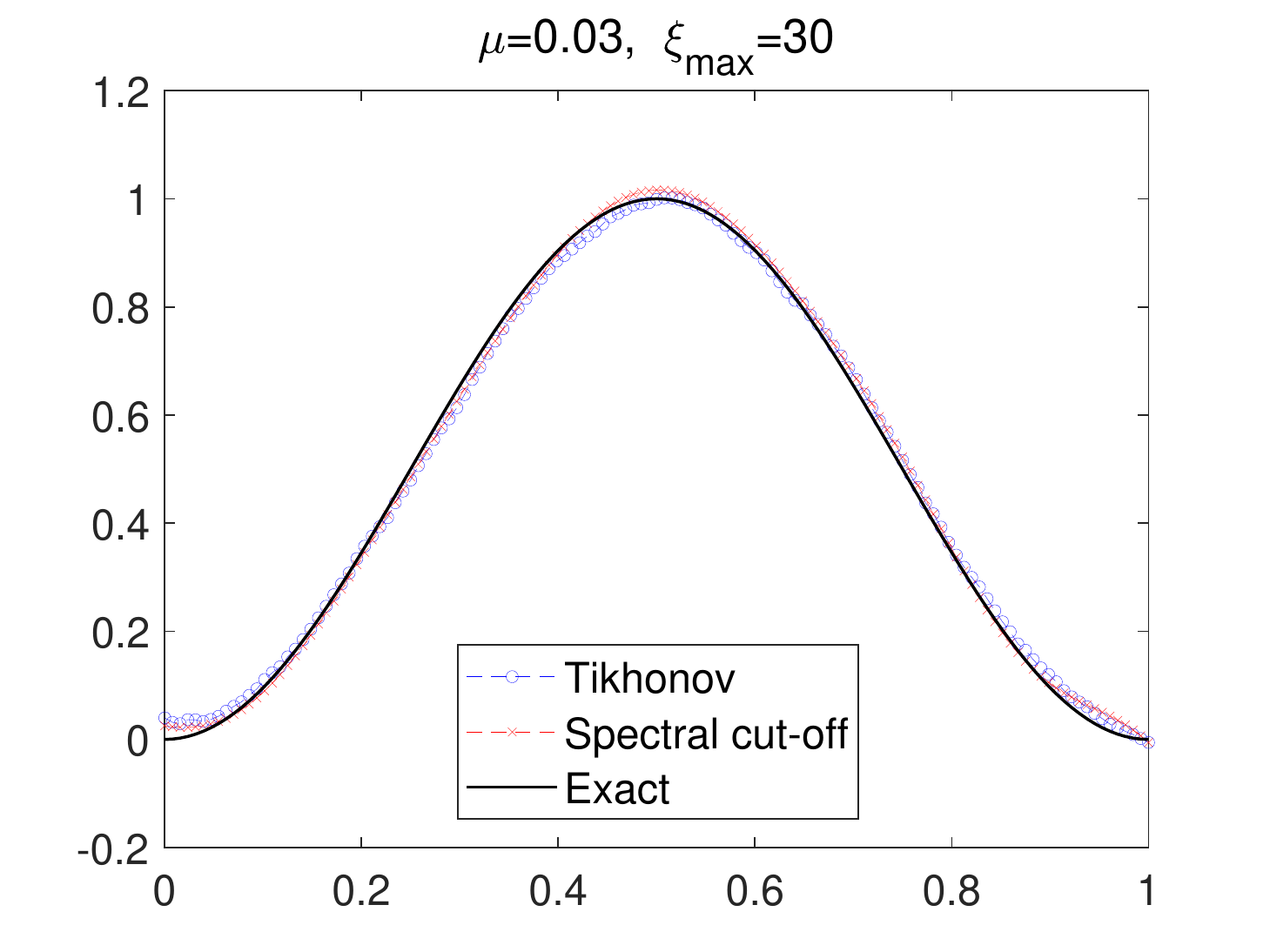}\\
  \includegraphics[width=0.32\textwidth]{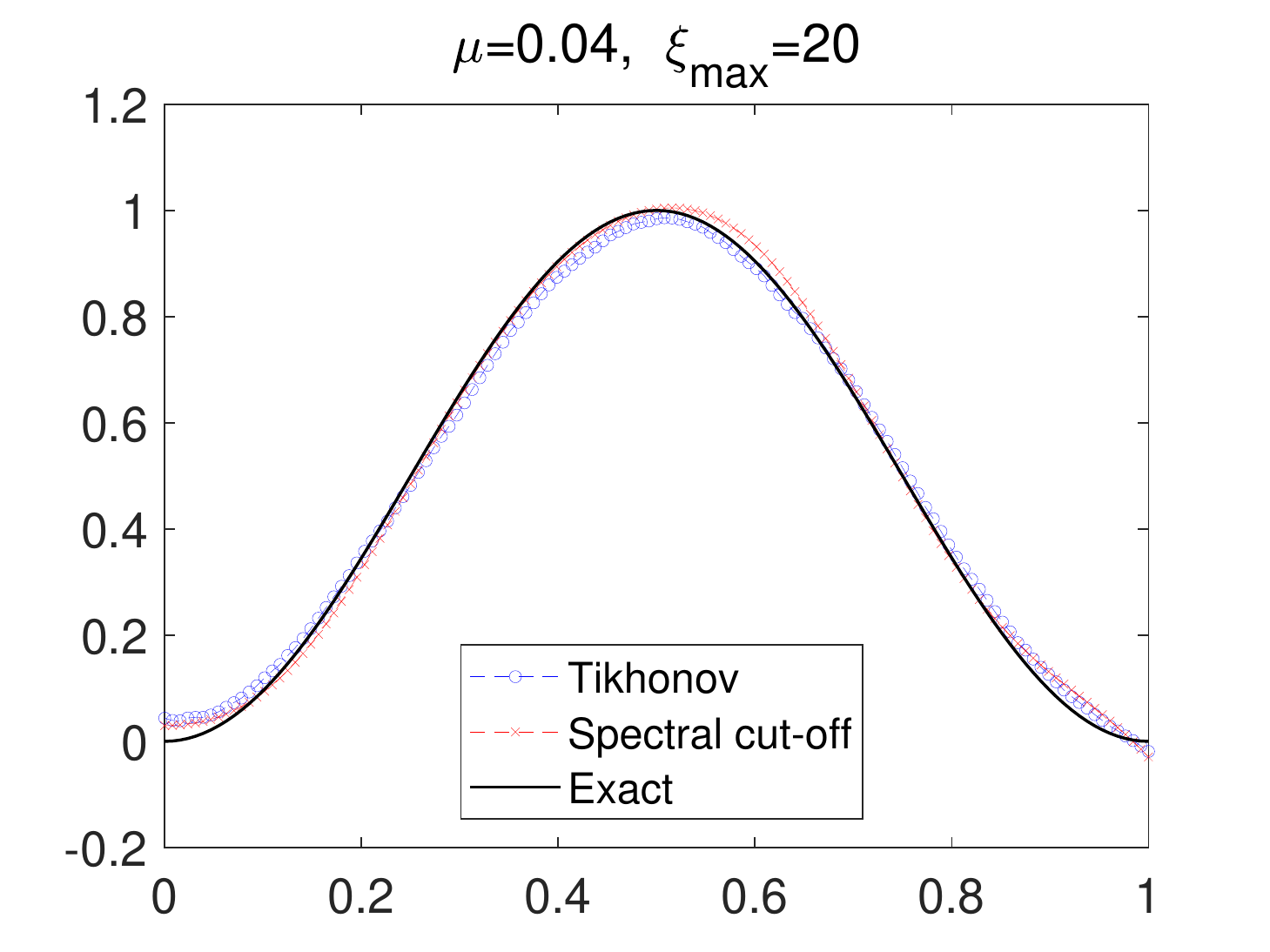}
  \includegraphics[width=0.32\textwidth]{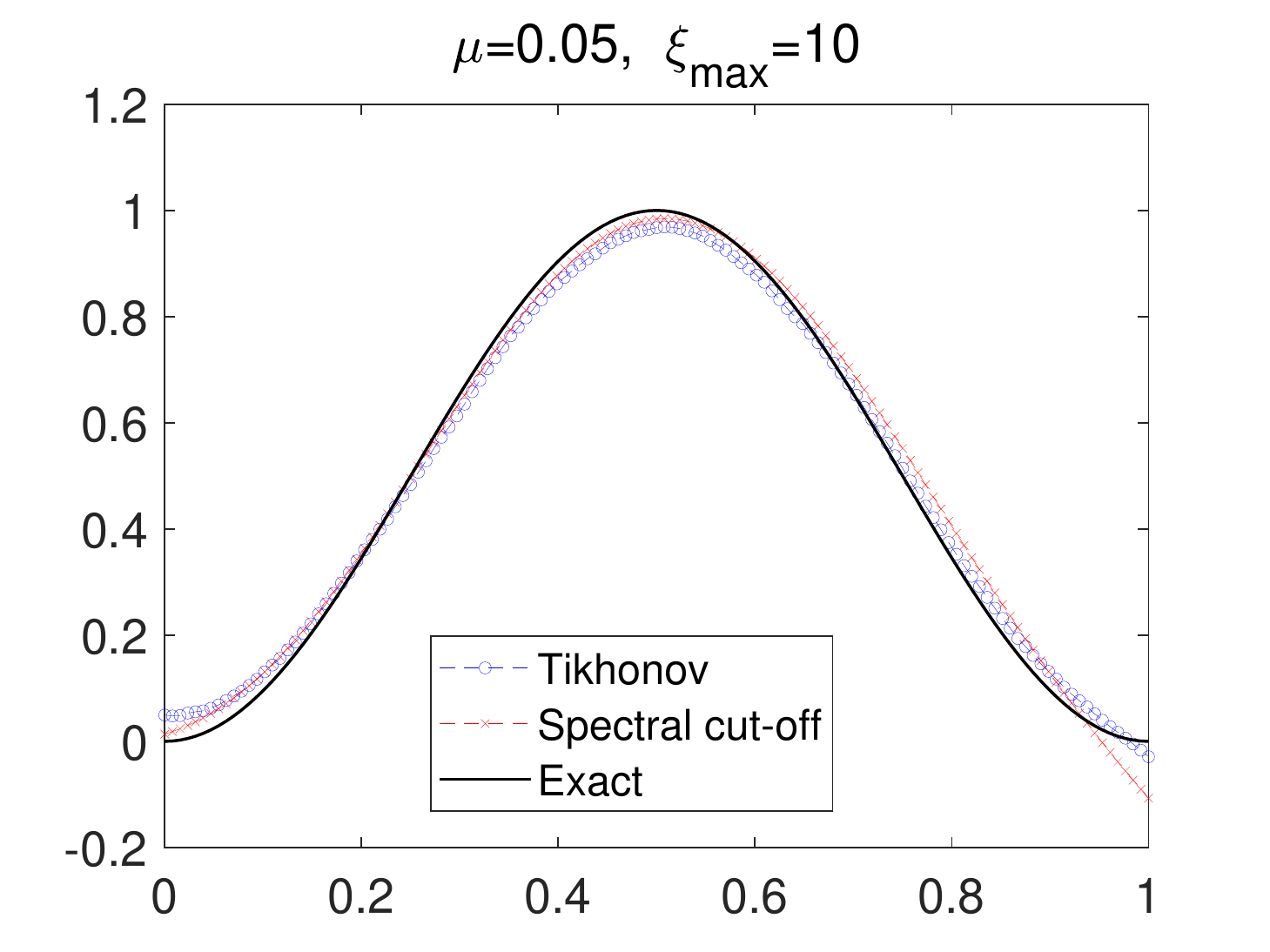}
  \includegraphics[width=0.32\textwidth]{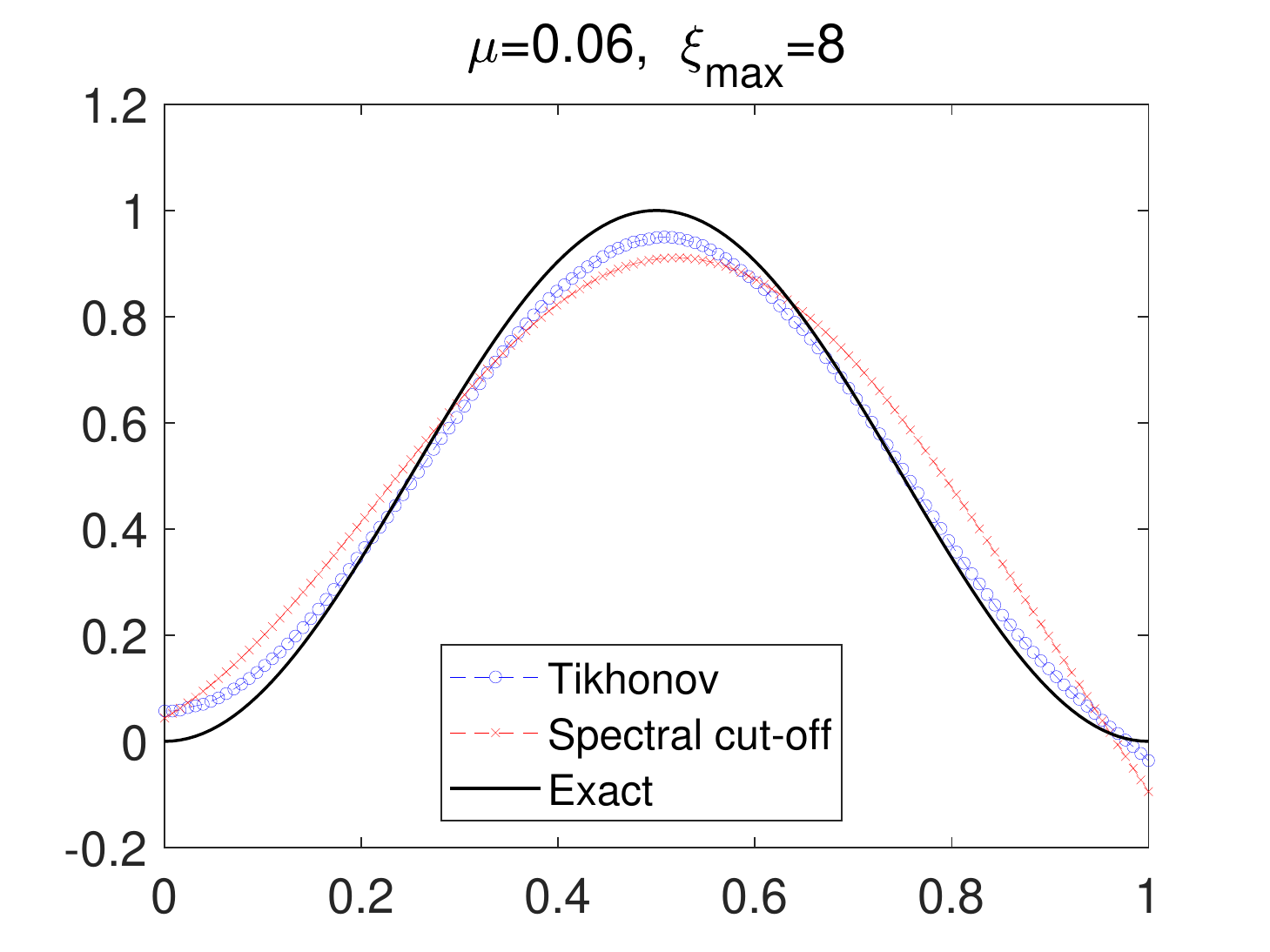}
  \caption{Example 1: the reconstruction of $q^2$ with different regularization parameters at a fixed noise level ($\epsilon=0.2$) and a fixed number of realizations ($P=10^6$).}\label{e102p6}
\end{figure}

\begin{figure}
  \includegraphics[width=0.32\textwidth]{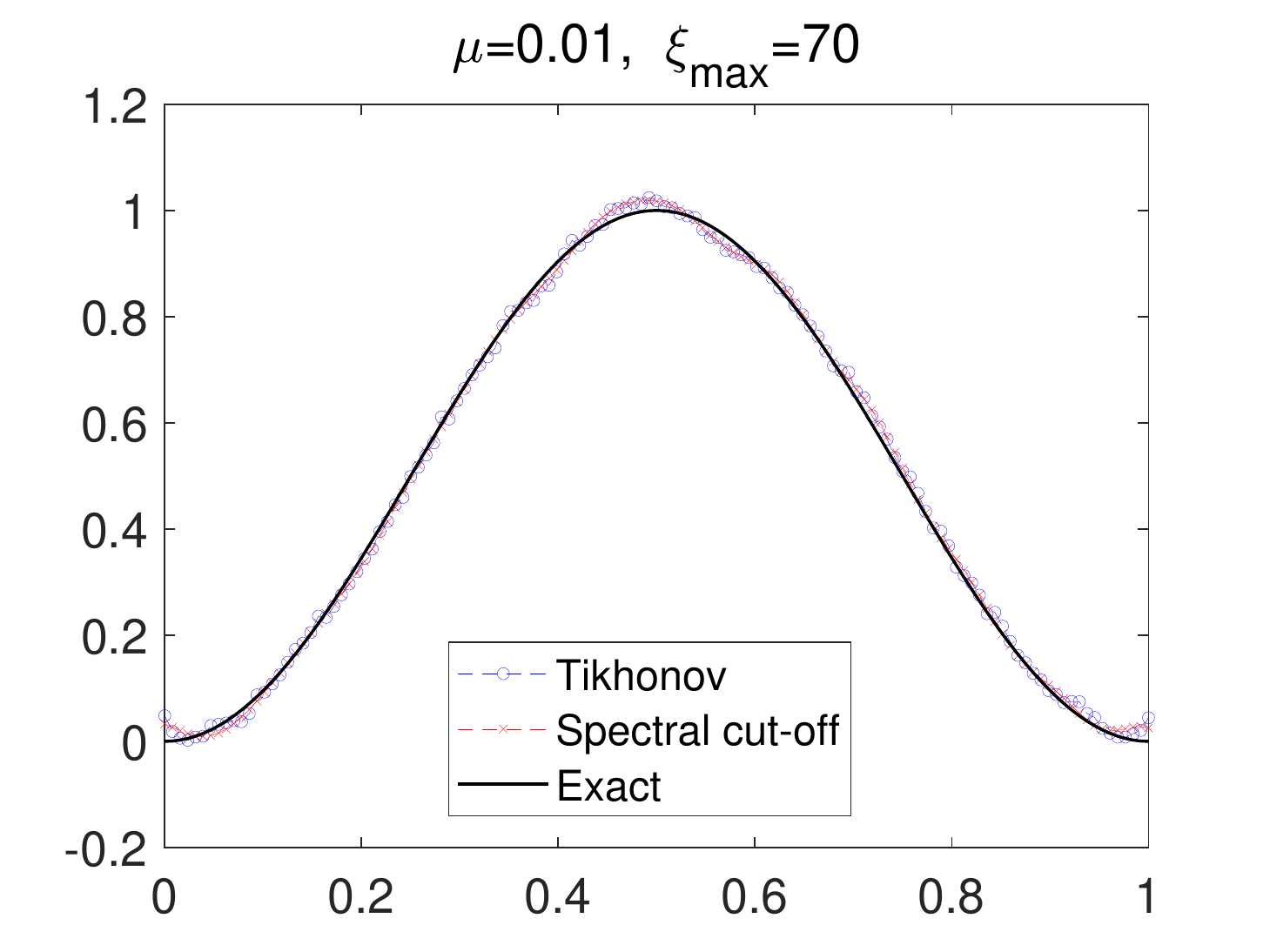}
  \includegraphics[width=0.32\textwidth]{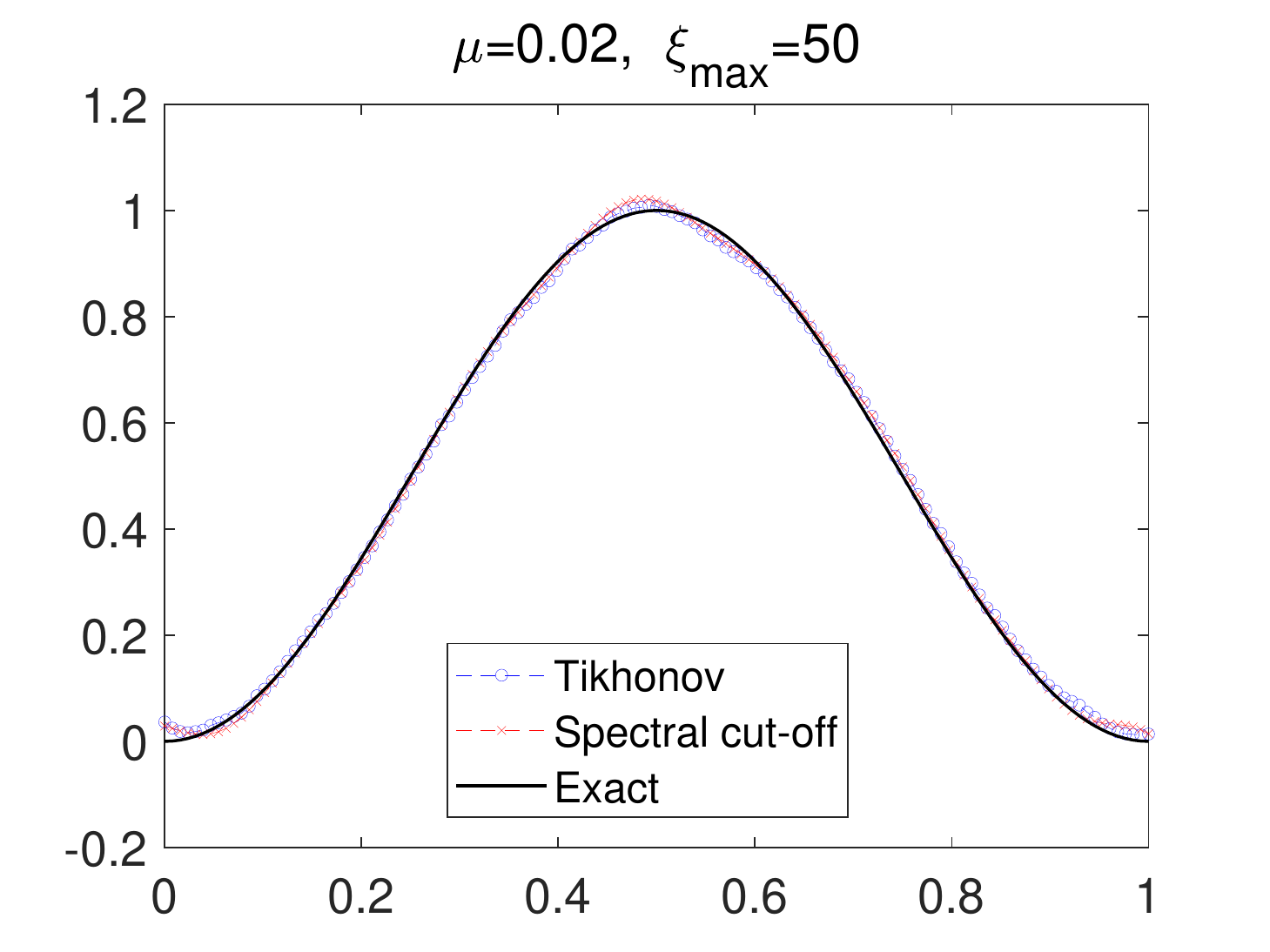}
  \includegraphics[width=0.32\textwidth]{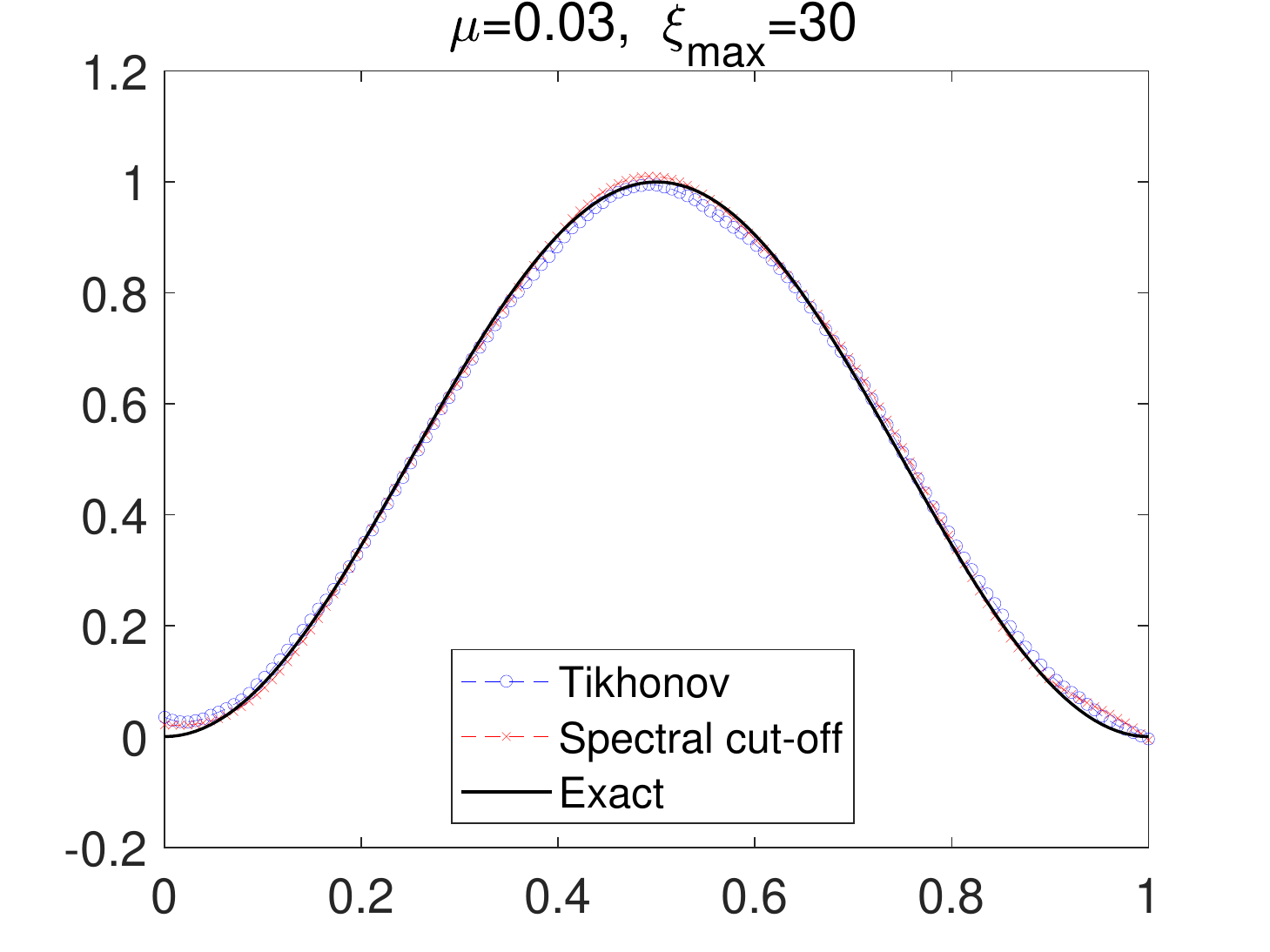}\\
  \includegraphics[width=0.32\textwidth]{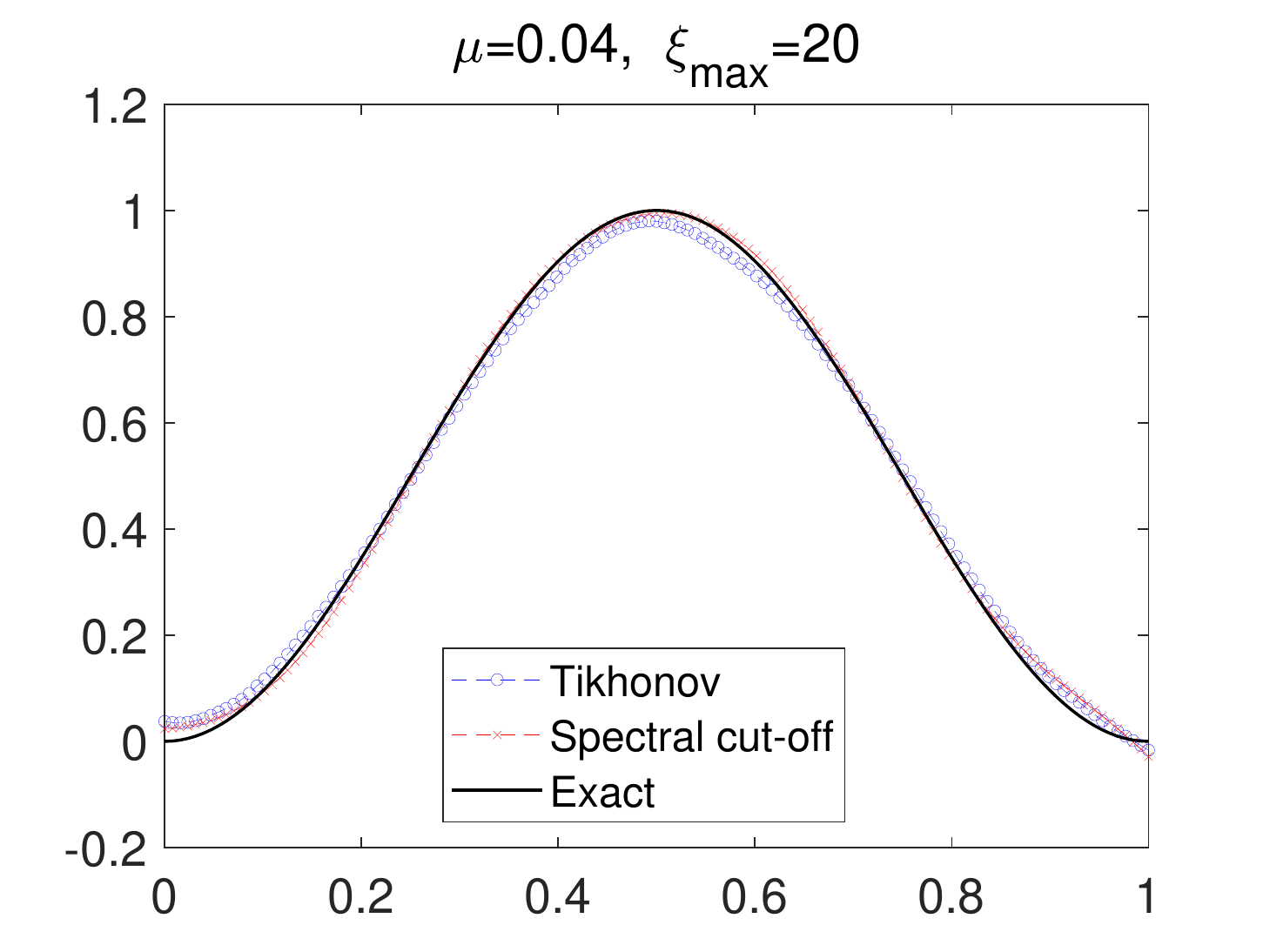}
  \includegraphics[width=0.32\textwidth]{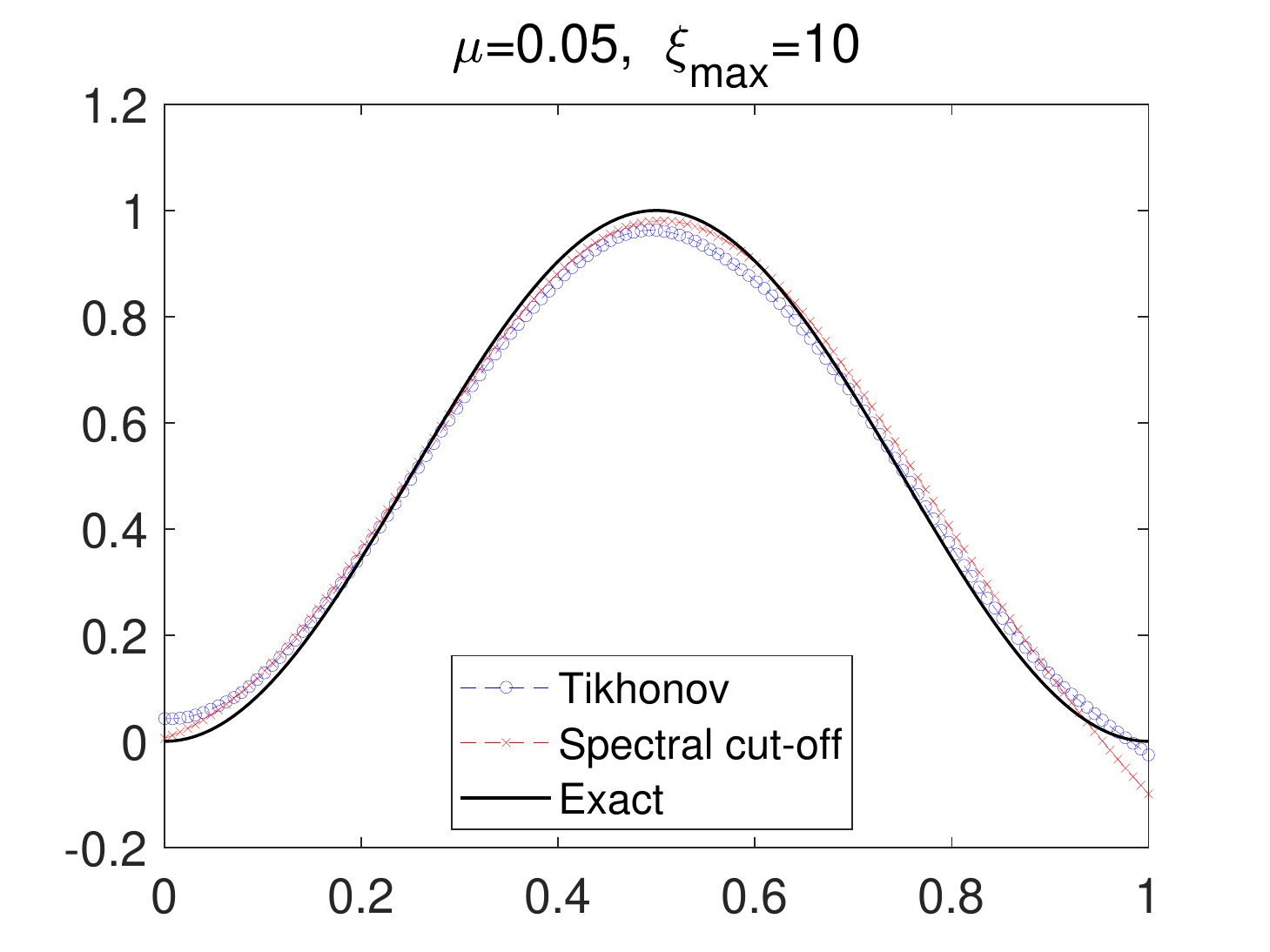}
  \includegraphics[width=0.32\textwidth]{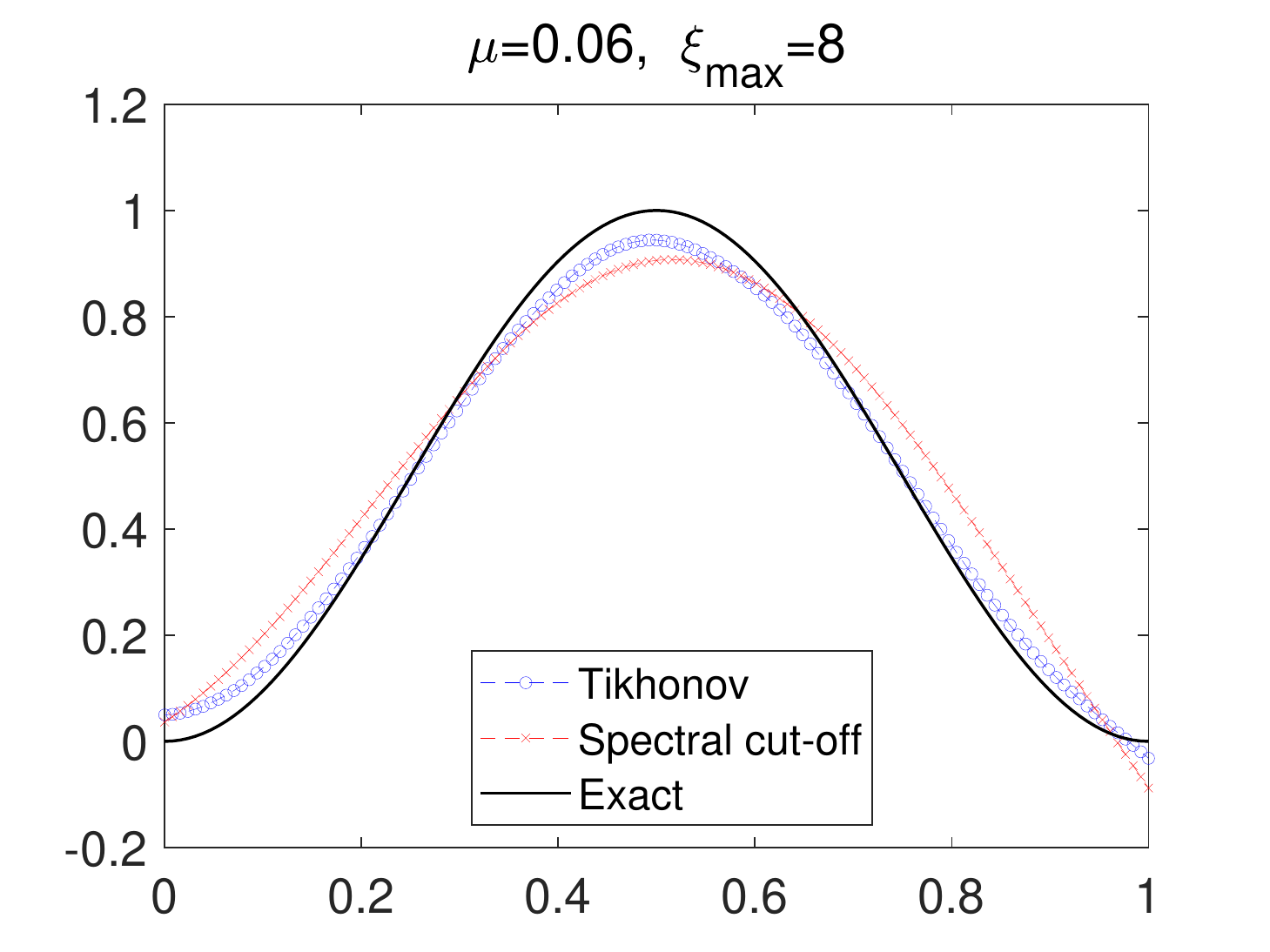}
  \caption{Example 1: the reconstruction of $q^2$ with different regularization parameters at a fixed noise level ($\epsilon=0.1$) and a fixed number of realizations ($P=10^6$).}\label{e101p6}
\end{figure}

\begin{figure}
  \includegraphics[width=0.32\textwidth]{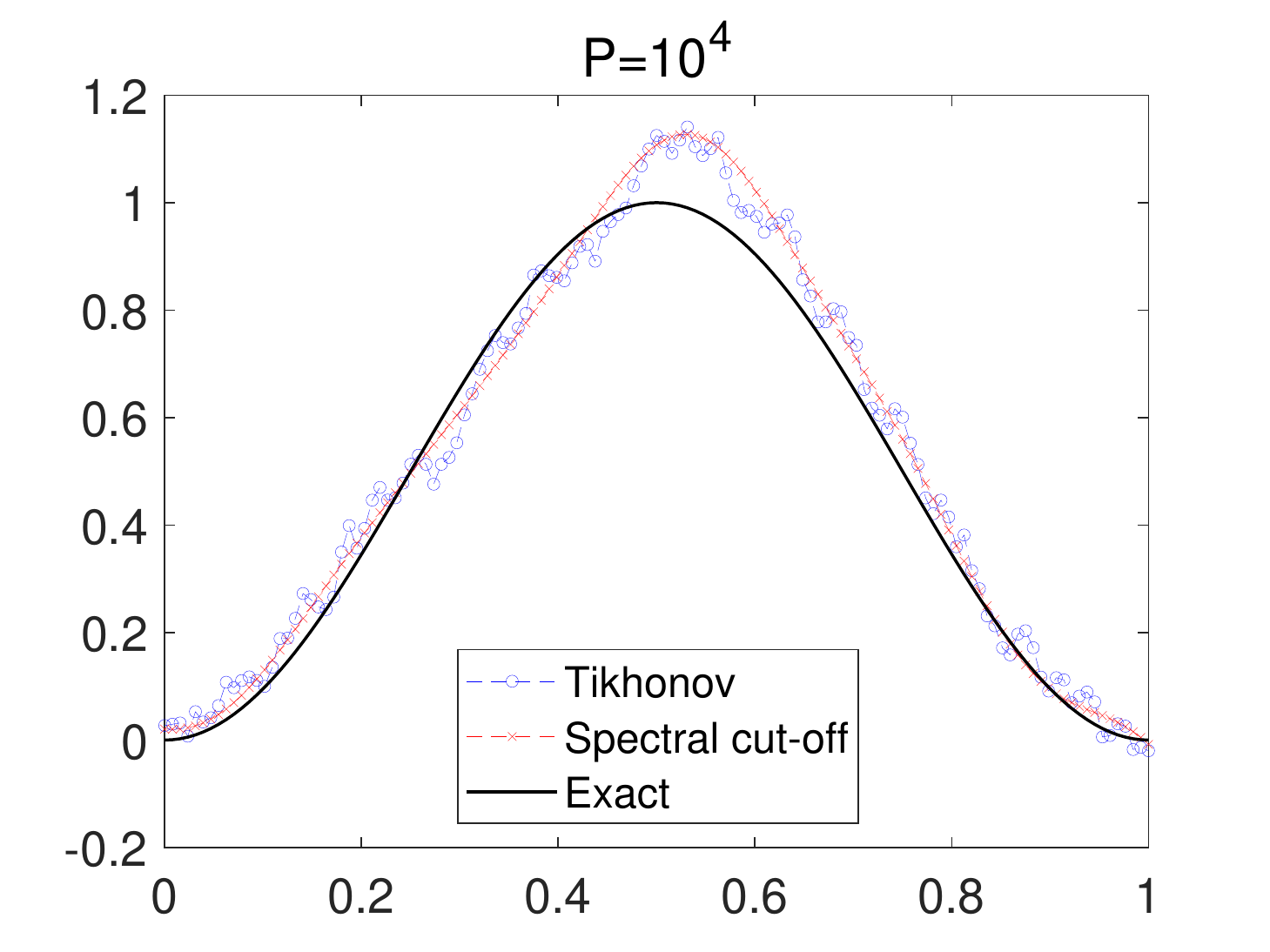}
  \includegraphics[width=0.32\textwidth]{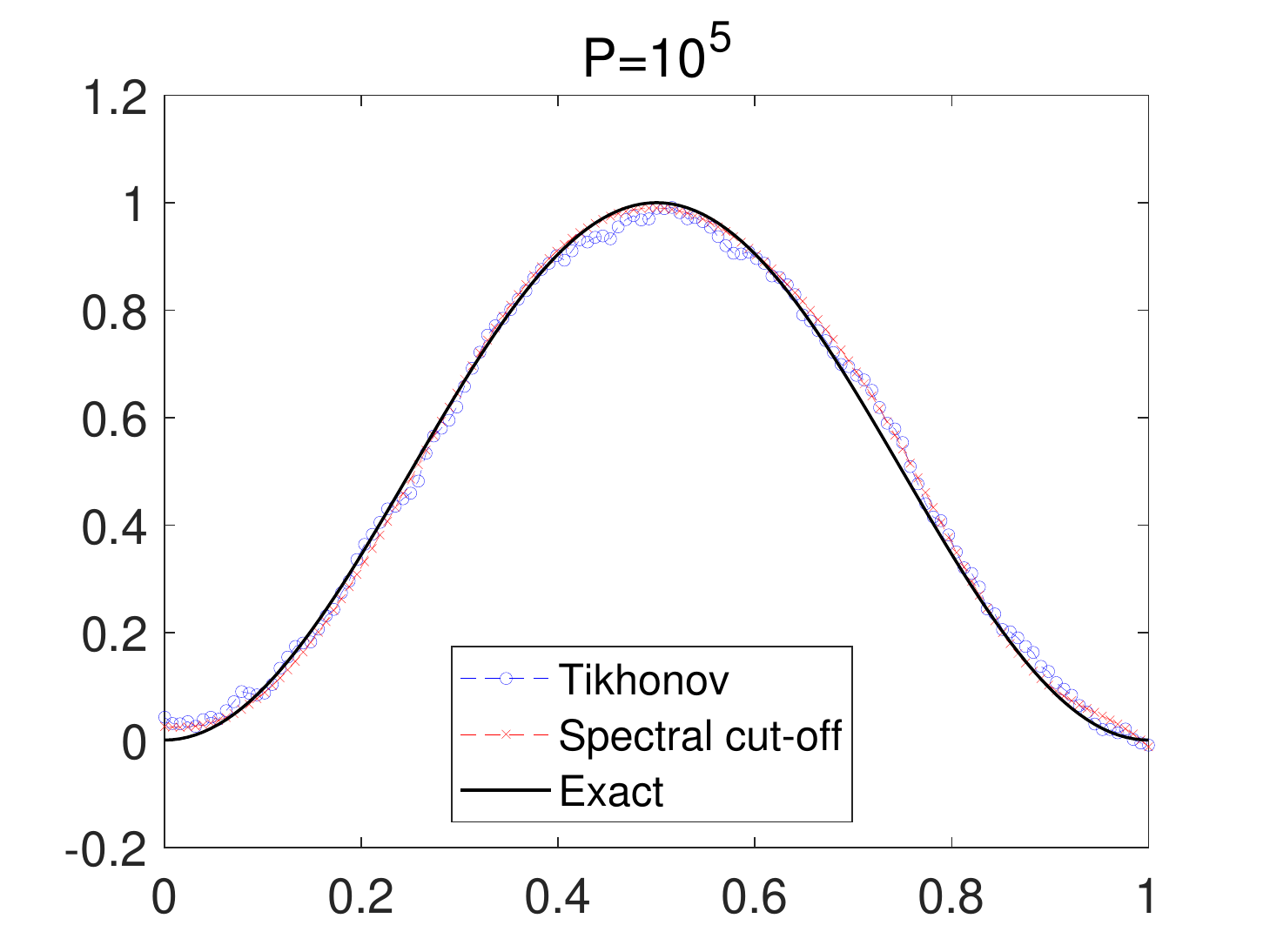}
  \includegraphics[width=0.32\textwidth]{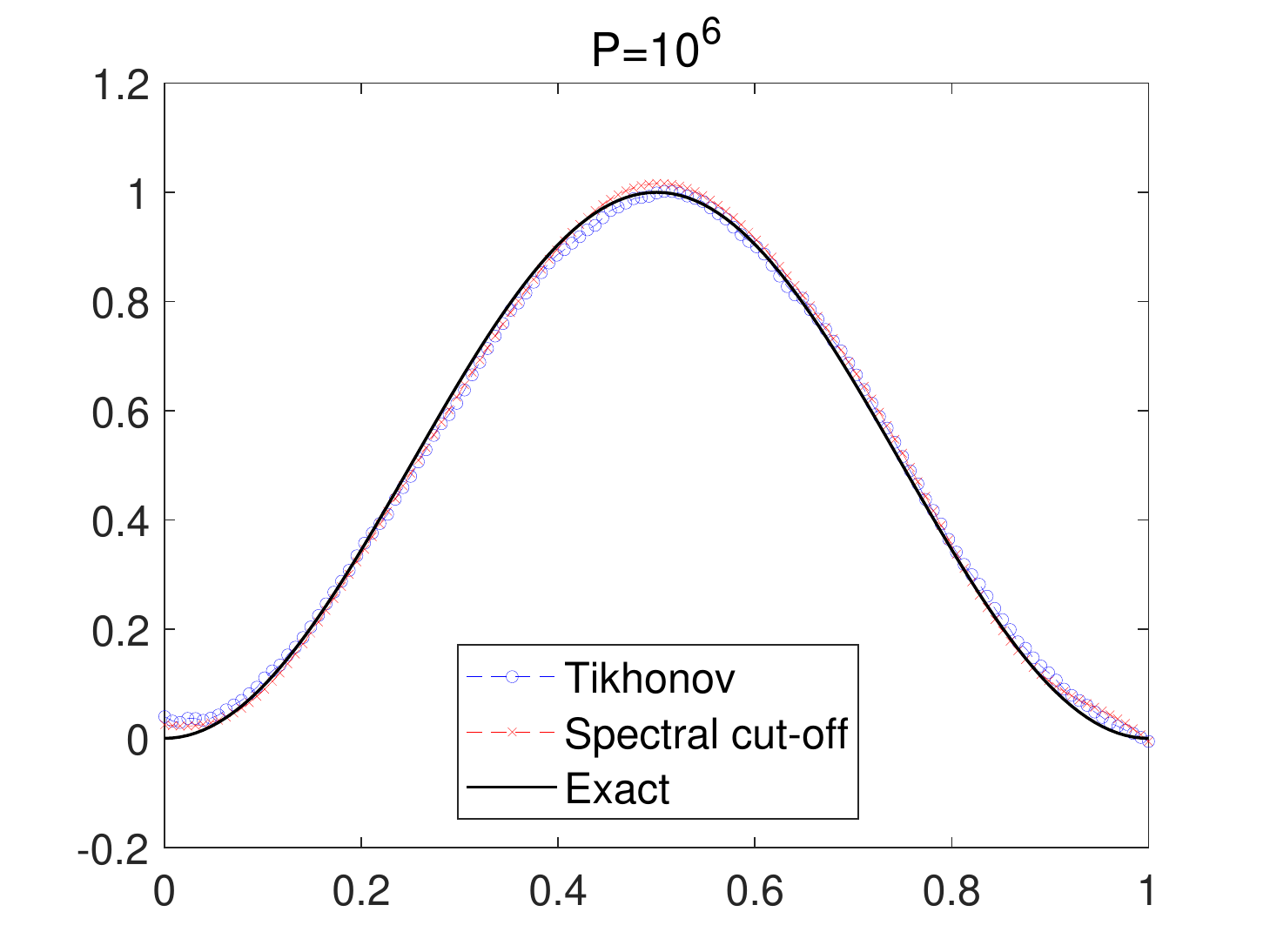}
  \caption{Example 1: the reconstruction of $q^2$ with a different number of realizations ($P=10^4,10^5,10^6$) at a fixed noise level ($\epsilon=0.2$) and a fixed regularization parameter ($\mu=0.03$, $\xi_{\rm max}=30$).}\label{e1p54}
\end{figure}

First, we report the data functions under the influence of the noise level $\epsilon$ and the number of realizations $P$. Figure \ref{e1psip6} shows the data function $\psi(t)$,  $t\in[0, 1]$ defined in \eqref{psifun} and its periodization $\Psi(t)$, $t\in[-1, 2]$ at different noise levels ($\epsilon=0.5, 0.2, 0.1$) with a fixed number of realizations ($P=10^6$). As expected, the data function $\psi$ is smoother if the noise level $\epsilon$ is smaller. Figure \ref{e1psip54} shows the periodized data function $\Psi(t)$, $t\in[-1, 2]$ with a different number of realizations ($P=10^4,10^5,10^6$) at a fixed noise level ($\epsilon=0.5$). It is clear to note that a larger number of realizations can produce a better approximation to the expectation in \eqref{psin} and thus yield a data function with less oscillation.

Next, we investigate the reconstructions by using different regularization parameters and noise levels at a fixed number of realizations. In Figures \ref{e105p6}--\ref{e101p6}, the exact function $q^2$ is plotted against the reconstructed results by using the Tikhonov and spectral cut-off regularization methods. In each figure, the results are shown for different regularization parameters ($\mu=0.01:0.01:0.06$ or $\xi_{\rm max}=70,50,30,20,10,8$) at a fixed noise level and a fixed number of realizations ($P=10^6$). It can be observed that the methods are stable and produce good reconstructions for appropriately chosen regularization parameters; the results are under-regularized for small $\mu$ or large $\xi_{\rm max}$, while the results are over-regularized for large $\mu$ or small $\xi_{\rm max}$; the methods work better for the regularization parameters given in the range $\mu\in[0.03, 0.04]$ or $\xi_{\rm max}\in[20, 30]$. Fixing the regularization parameters, we may compare the corresponding results shown in Figures \ref{e105p6}--\ref{e101p6} for different noise levels ($\epsilon=0.5, 0.2, 0.1$). Apparently, the results are better for smaller noise levels.

Finally, we consider the influence of the number of realizations. Figure \ref{e1p54} shows the results by using a different number of realizations ($P=10^4,10^5,10^6$) at a fixed noise level ($\epsilon=0.2$) and a fixed regularization parameter ($\mu=0.03, \xi_{\rm max}=30$). It can be seen that a larger number of realizations gives a better approximation to the expectation of the data and yields a better reconstruction.

\subsubsection{Example 2}

The exact potential function is
\begin{equation*}
q(t)=\left\{
\begin{aligned}
&0,&&\quad 0\leq t\leq\frac15,\\
&4\sqrt{t-\frac15}, &&\quad \frac15\leq t\leq\frac12,\\
&4\sqrt{-t+\frac45}, &&\quad\frac12\leq t\leq\frac45,\\
&0,&&\quad\frac45\leq t\leq1,
\end{aligned}
\right.
\end{equation*}
which is continuous on $[0, 1]$ but is not differentiable at $t=\frac{1}{5}, \frac{1}{2}, \frac{4}{5}$. We shall not document the detailed results with different parameters since the patterns are similar to those of Example 1.

\begin{figure}
  \includegraphics[width=0.32\textwidth]{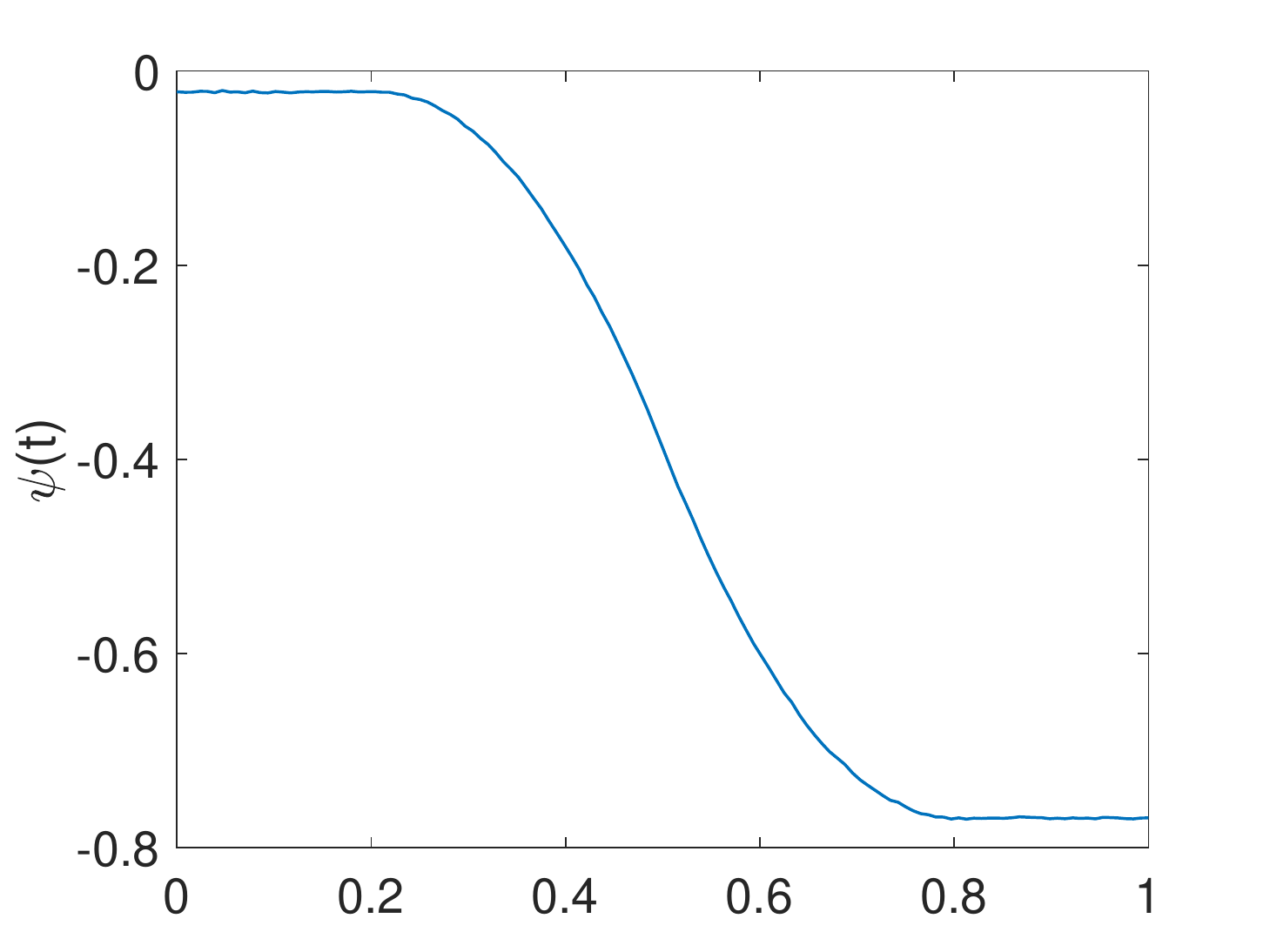}
  \includegraphics[width=0.32\textwidth]{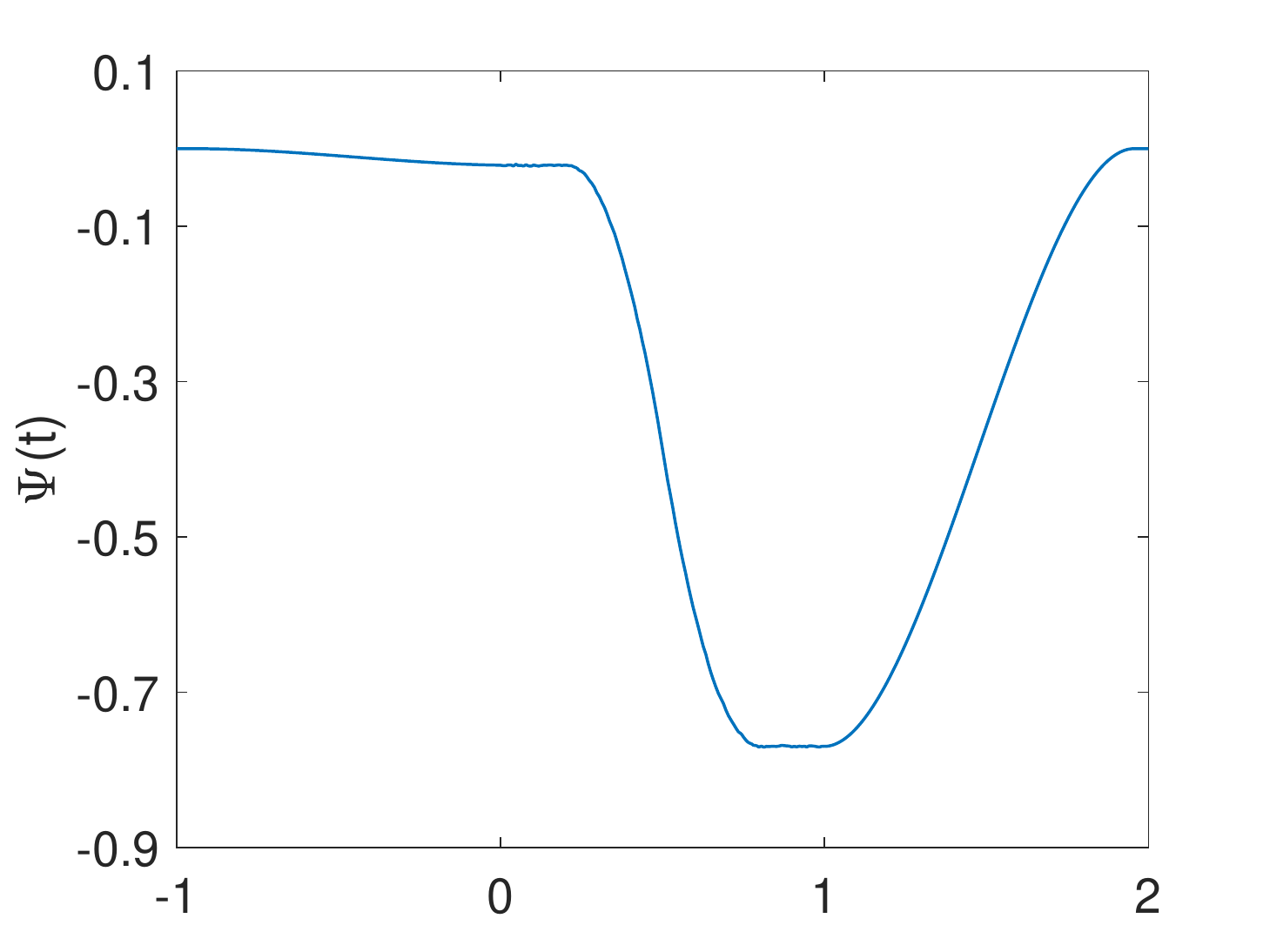}
  \includegraphics[width=0.32\textwidth]{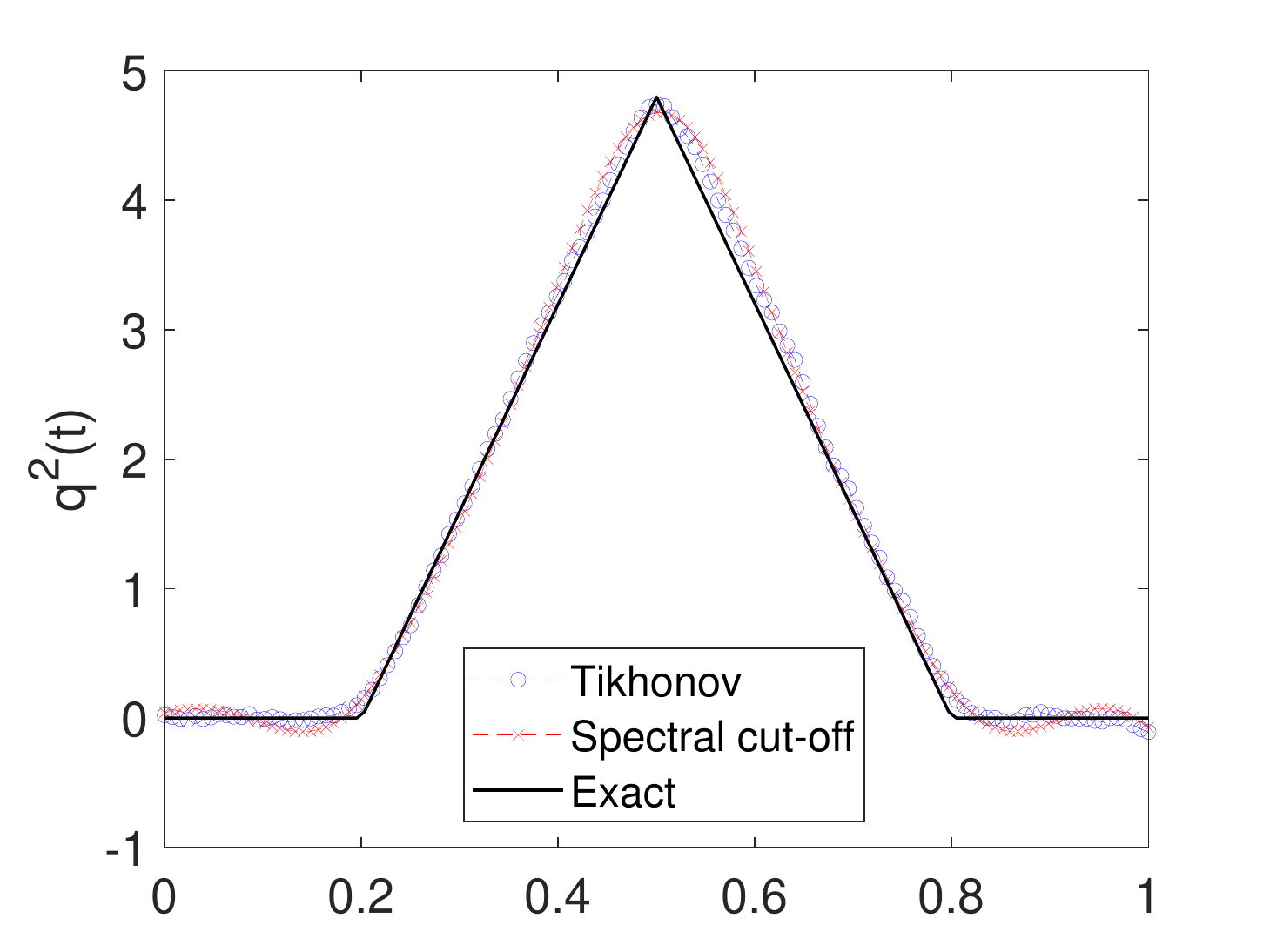}
  \caption{Example 2: the data function $\psi(t)$ on $[0, 1]$ (left), the periodization $\Psi$ on $[-1,2]$ (middle), and the reconstruction of  $q^2$ (right) with the parameters given by $P=10^5$, $\epsilon=0.2$, $\mu=0.02$, $\xi_{\rm max}=30$. }\label{e2P5}
\end{figure}

As suggested from the experiments of Example 1, the regularization parameters are chosen as $\mu=0.02$ and $\xi_{\rm max}=30$. Figure \ref{e2P5} shows the data function $\psi(t)$ for $t\in[0, 1]$, the periodization $\Psi(t)$ for $t\in[-1, 2]$, and the reconstruction of $q^2$ with the number of realizations $P=10^5$ and the noise level $\epsilon=0.2$. It can be seen that the Tikhonov regularization method and the spectral cut-off method give similar numerical results, and both methods work well for such a nonsmooth function.

\subsubsection{Example 3}

The exact potential is a piecewise constant function defined by
\begin{equation*}
q(t)=\left\{
\begin{aligned}
0,&&\quad 0\leq t\leq\frac15,\\
1, &&\quad \frac15<t\leq\frac12,\\
2, &&\quad\frac12<t\leq\frac45,\\
0,&&\quad\frac45< t\leq1.
\end{aligned}
\right.
\end{equation*}
As a discontinuous function, it contains infinitely many Fourier modes and the Fourier coefficients decay slowly. Thus this example is more difficult than the previous ones.

\begin{figure}
\includegraphics[width=0.32\textwidth]{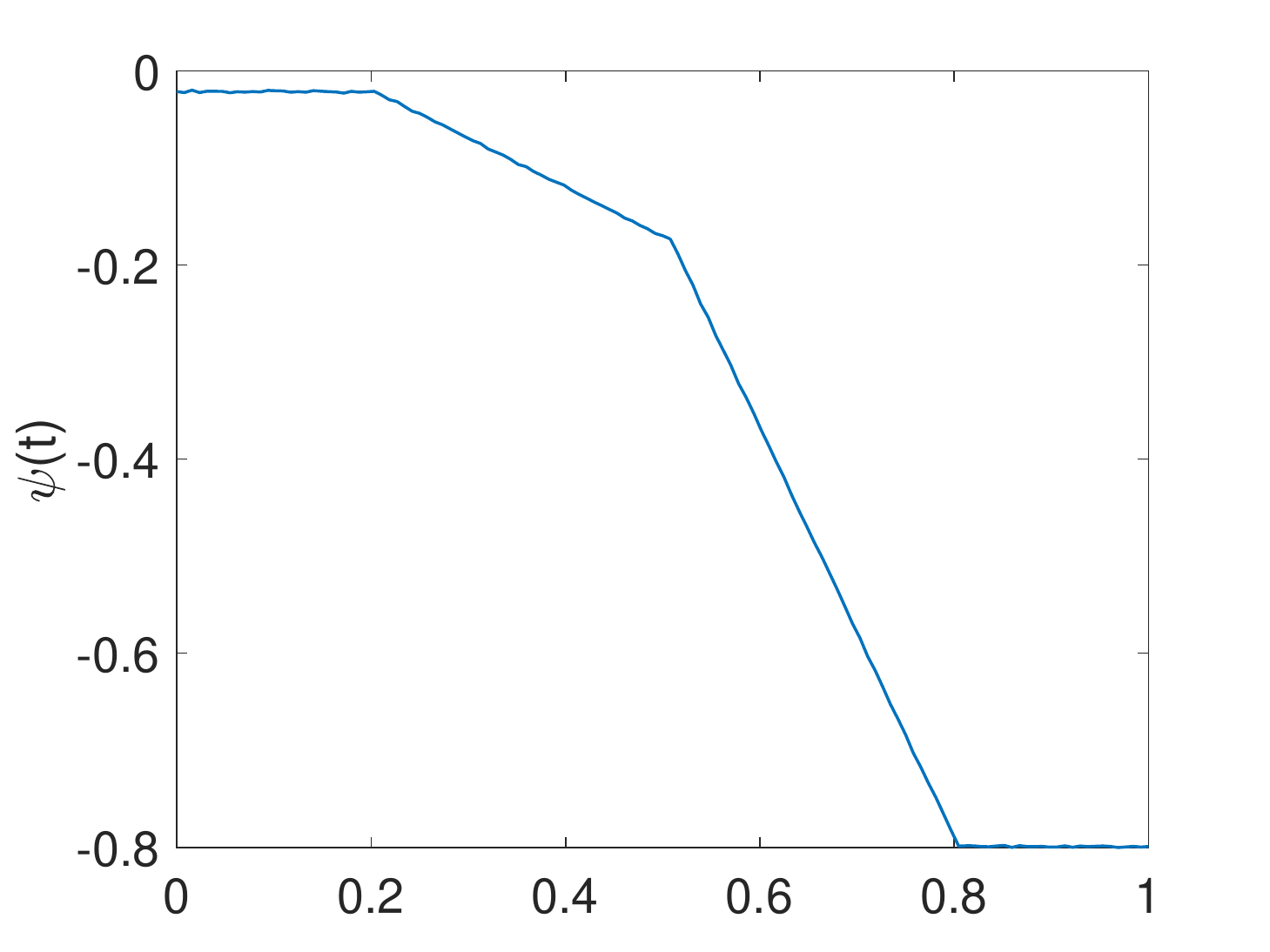}
  \includegraphics[width=0.32\textwidth]{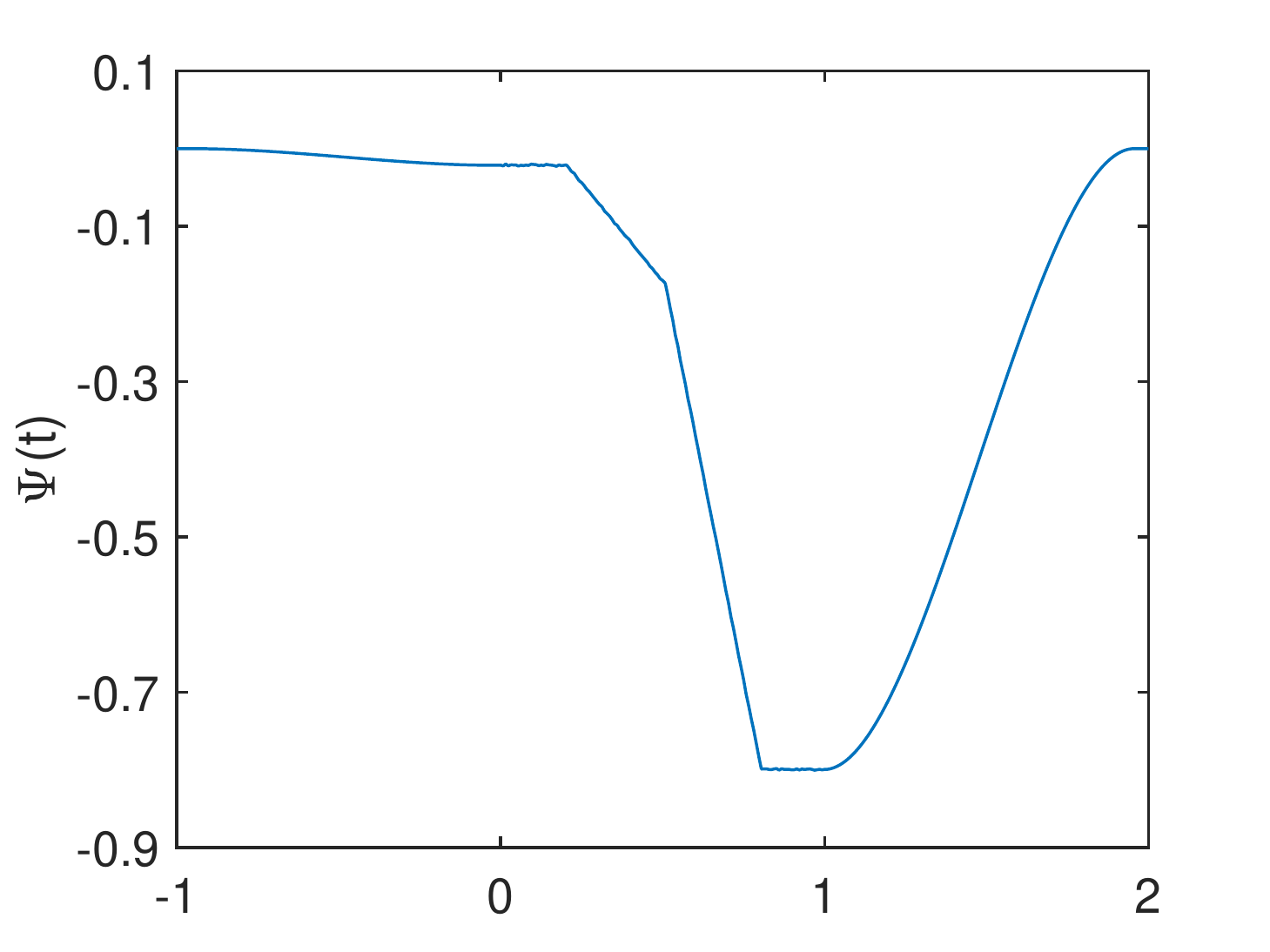}
  \includegraphics[width=0.32\textwidth]{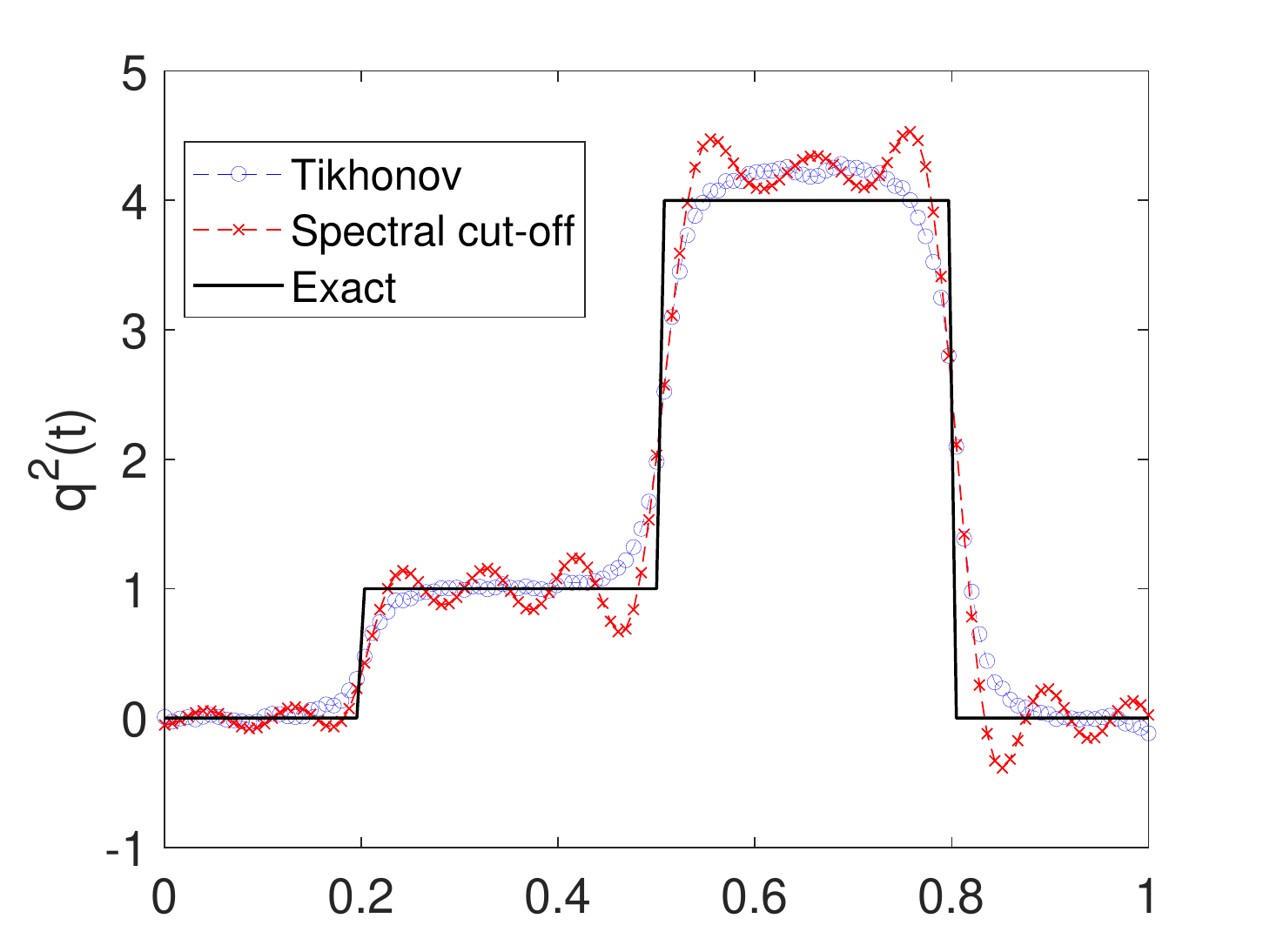}
  \caption{Example 3: the data function $\psi(t)$ on $[0, 1]$ (left), the periodization $\Psi$ on $[-1,2]$ (middle), and the reconstruction of $q^2$ (right) with the parameters given by $P=10^5$, $\epsilon=0.2$, $\mu=0.02$, $\xi_{\rm max}=70$.}\label{e3P5}
\end{figure}

Again, we shall not present the influence of the various parameters on the reconstructions but choose to show the results by using representative parameters. Figure \ref{e3P5} shows the data function $\psi(t)$ for $t\in[0, 1]$, the periodization $\Psi(t)$ for $t\in[-1, 2]$, and the reconstruction of $q^2$ with the regularization parameters $\mu=0.02$ and $\xi_{\rm max}=70$ for the fixed number of realizations $P=10^5$ and the fixed noise level $\epsilon=0.2$. It is worth mentioning that a larger frequency cut-off $\xi_{\rm max}$ should be chosen so that more Fourier modes of the potential function can be recovered for this discontinuous example. Comparing the two regularization methods, we observe that the Tikhonov regularization method yields a smoother reconstruction while the spectral cut-off method displays the Gibbs phenomenon for the reconstructed function, which is common for the Fourier based method to recover discontinuous functions.

\section{Conclusion}

We studied both the direct and inverse problems for the stochastic diffusion equation with a multiplicative time-dependent white noise. For the direct problem, we examined the existence, uniqueness, and regularity of the mild solution. For the inverse problem, an explicit reconstruction formula was deduced by establishing the relation between the deterministic diffusion equation and the stochastic diffusion equation. The uniqueness was obtained to determine $q^2$, which implies the uniqueness of the inverse problem for nonnegative potential functions. To overcome the ill-posedness of numerical differentiation, we adopted the Tikhonov and spectral cut-off regularization methods which were implemented efficiently by using the FFT. The results show that the methods are effective to reconstruct both smooth and nonsmooth potential functions.

In this work, the potential function is assumed to be time-dependent and the stochastic diffusion equation is driven by the multiplicative time-dependent white noise. We plan to consider the inverse problems for more general potential functions and other types of noise, such as the colored noise, the fractional Brownian motion, or the space-time white noise. The progress will be reported elsewhere in the future.


\end{document}